\newtheorem{theorem}{Theorem}
\newtheorem{proposition}[theorem]{Proposition}
\newtheorem{corollary}[theorem]{Corollary}
\theoremstyle{definition}
\newtheorem{definition}[theorem]{Definition}
\newtheorem{remark}[theorem]{Remark}
\def\thm#1{Theorem~\ref{thm:#1}}
\def\rmark#1{Remark~\ref{rmark:#1}}
\newcommand{\R}{\mathbb{R}}
\newcommand{\Z}{\mathbb{Z}}
\newcommand{\bdry}{\partial}
\newcommand{\cross}{\times}
\def\co{\colon\!}
\newcommand{\transverse}{\pitchfork}
\newcommand{\SO}{\operatorname{SO}}
\newcommand{\Or}{\operatorname{O}}
\DeclareMathOperator{\Sim}{Sim}
\DeclareMathOperator{\rank}{rank}
\DeclareMathOperator{\id}{id}
\DeclareMathOperator{\Vol}{Vol}
\DeclareMathOperator{\CM}{CM}
\newcommand{\pij}{\pi_{ij}}
\newcommand{\rijl}{r_{ijl}}
\newcommand\norm[1]{\left| #1 \right|}
\newcommand{\p}{\protect\overrightarrow{\mathbf{p}} }
\newcommand{\q}{\protect\overrightarrow{\mathbf{q}} }
\newcommand{\x}{\protect\overrightarrow{\mathbf{x}} }
\newcommand{\bfe}{\mathbf{e}}
\newcommand{\bfp}{\mathbf{p}}
\newcommand{\bfq}{\mathbf{q}}
\newcommand{\bfr}{\mathbf{r}}
\newcommand{\bfv}{\mathbf{v}}
\newcommand{\bfx}{\mathbf{x}}
\newcommand{\bfzero}{\mathbf{0}}
\newcommand{\cnm}{C_n[M]}
\newcommand{\cnmo}{C_n(M)}
\newcommand{\cnr}{C_n[\R^k]}
\newcommand{\cnro}{C_n(\R^k)}
\newcommand{\cnf}{C_n[f]}
\def\mapright#1#2{
\mathop{\longrightarrow}\limits^{\scriptstyle#1}_{\scriptstyle#2}}
\begin{document}
\title{Families of similar simplices inscribed in most smoothly embedded spheres.}
\author{Jason Cantarella}
\address{Mathematics Department, University of Georgia, Athens GA 30602}
\email{jason.cantarella@uga.edu}
\author{Elizabeth Denne}
\address{Mathematics Department, Washington \& Lee University, Lexington VA 24450}
\email{dennee@wlu.edu}
\author{John McCleary}
\address{Mathematics \& Statistics Department, Vassar College, Poughkeepsie NY 12604}
\email{mccleary@vassar.edu}

\makeatletter								% This is needed until the AMS updates their amsart style.
\@namedef{subjclassname@2020}{%
  \textup{2020} Mathematics Subject Classification}
\makeatother

\subjclass[2020]{Primary 55R80, Secondary 51M04, 51K99, 57Q65, 58A20.}
\keywords{Simplices, embedded spheres, configuration spaces, square-peg problem}

\begin{abstract} 

Let $\Delta$ denote a non-degenerate $k$-simplex in $\R^k$. The
set $\Sim(\Delta)$ of simplices in $\R^k$ similar to $\Delta$ is diffeomorphic to $\Or(k)\times [0,\infty)\times \R^k$, where the factor 
in $\Or(k)$ is a matrix called the {\it pose}.
Among $(k-1)$-spheres smoothly embedded in $\R^k$ and isotopic to the identity, there is a dense family of spheres, for which 
the subset of $\Sim(\Delta)$ of simplices inscribed in each embedded sphere contains a similar simplex of every pose $U\in \Or(k)$. 
Further, the intersection of $\Sim(\Delta)$ with the configuration space of $k+1$ distinct points on an embedded sphere is a 
manifold whose top homology class maps to the top class in $\Or(k)$ via the pose map. This gives a high dimensional 
generalization of classical results on inscribing families of triangles in plane curves.  We use techniques established 
in our previous paper on the square-peg problem where we viewed inscribed simplices in spheres as transverse intersections 
of submanifolds of compactified configuration spaces.
\end{abstract}

\date{\today}
\maketitle

%%%%%%%%%%%%%%%%%%%%%%%%%%%%%%%%%%%%%%%
%%%%%%%%%%%%%%%%%%%%%%%%%%%%%%%%%%%%%%%

\section{Introduction}\label{sect:intro} 
There is a general type of problem of finding special geometric configurations on families of manifolds. Quite often such problems seek to find some kind of polyhedron or polytope inscribed in a circle or sphere embedded in space. Our paper seeks to find constructible non-degenerate simplices inscribed in spheres smoothly embedded in $\R^k$.   Specifically, for a given non-degenerate $k$-simplex $\Delta$, we show that among all smoothly embedded $(k-1)$ spheres in $\R^k$ isotopic to the identity through a differentiable isotopy in $\R^k$, there is a dense family of spheres such that each sphere has an inscribed simplex similar to $\Delta$ corresponding to each $U\in \Or(k)$.  An example of an  embedded 2-sphere in $\R^3$ with an inscribed equilateral tetrahedra is found in Figure~\ref{fig:tetra}.
We in fact prove more: If we let $\Sim(\Delta)$ denote the configuration space of simplices in $\R^k$ similar to a non-degenerate $k$-simplex $\Delta$, then in $\Sim(\Delta)$, the top homology class of the inscribed simplices similar to $\Delta$ on an embedded sphere  maps to the top class in $\Or(k)$. 

In 1969, M.L. Gromov~\cite{MR244907} showed that every $C^1$-smooth embedding of a $(k-1)$-sphere in $\R^k$ %which bounds a ball on both sides 
contains an inscribed simplex similar to $\Delta$ for each pose $U\in \Or(k)$.  Our results go further than Gromov. We show that the inscribed simplices in embedded spheres form a manifold cobordant to $\Or(k)$ using configuration spaces and transversality as in~\cite{Slq}.

Another closely related result is the 2009 work of P.V.M.~Blagojevi\'c and G.~Ziegler~\cite{MR2515779}. They prove that for every injective continuous maps of $f\co S^2 \rightarrow \R^3$, there are four distinct points in the image of $f$ with the property that two opposite edges have the same length and the other four edges are also of equal length.  While this result holds for injective maps, our theorem recovers an even more general result for generic smooth embeddings. In addition, B. Matschke~\cite{phd-Matschke} proved that any smoothly embedded compact surface $S$ inscribes a particular type of tetrahedron.

\begin{figure}
\hfill
\includegraphics[height=1.65in]{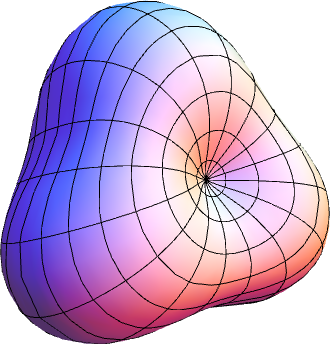}
\hfill
\includegraphics[height=1.65in]{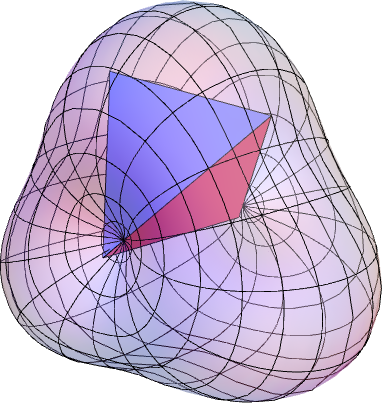}
\hfill
\includegraphics[height=1.65in]{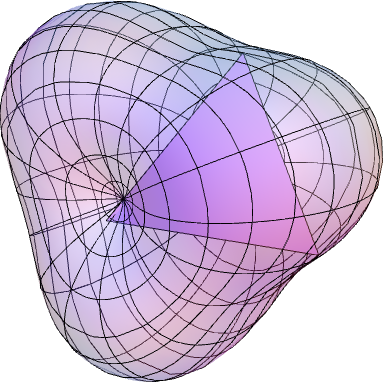}
\hfill
\hphantom{.}
\caption{On the left, we see an irregular embedding of $S^2$ in $\R^3$ described in spherical coordinates as a graph over the unit sphere by the function $r(\phi,\theta) = 1 + \sin^3\phi \sin 3\theta/5 - |\cos^7\phi|$. The center and right images show different views of a single regular tetrahedron inscribed in this surface with edgelengths close to $1.15$. If this embedding of $S^2$ is transverse to the submanifold of regular tetrahedra, this tetrahedron is a member of the family of inscribed regular tetrahedra predicted by Theorem~\ref{thm:main-theorem}. This tetrahedron was found by computer search. Its vertices have spherical $(\phi,\theta)$ coordinates $(0.224399, 0.224399), (1.5708, 3.36599), (1.5708, 2.0196), (2.91719, 0.224399)$.
\label{fig:tetra}}
\end{figure}

All of these theorems generalize the classical results on inscribing families of triangles in planar and spatial curves. In 1978, M.D.~Meyerson~\cite{MR600575} proved that  for a fixed arbitrary triangle, every simple closed curve in the plane contains the vertices of a triangle similar to the given one. He also proved that for every simple closed curve $\gamma$ in the plane, then for all, except perhaps two, points $x$ on $\gamma$, we can find points $y$ and $z$ in $\gamma$ such that $xyz$ is an equilateral triangle. In 1992, M.J.~Nielsen~\cite{MR1181760} proved that for any triangle $T$ and any simple closed curve $\gamma$ in the plane, there are infinitely many triangles similar to $T$ inscribed in $\gamma$. In fact, he proves that the set of vertices corresponding to the vertex of smallest angle in $T$ is dense in $\gamma$. In 2011, Matschke \cite{phd-Matschke} proved a more general result. He looked at $n$-gons whose edgelengths are in a prescribed ratio. He showed such polygons are inscribed in generic $C^\infty$-smooth embeddings of $S^1$ into a (complete) Riemannian manifold, and moreover there is a 1-parameter family of such polygons. He proves that there are an odd number of loops of such polygons that wind an odd number of times around the embedded $S^1$. Our main theorem and Matschke's results are very similar for triangles inscribed in embedded curves in the plane, except that we prove (Corollary~\ref{cor2}) that the degree of the map is 1.  In 2021, A. Gupta and S. Rubinstein-Salzedo \cite{gupta2021inscribed} generalized Nielsen's result to any Jordan curve embedded in $\R^n$ for a restricted set of triangles dependent on certain geometric conditions. They then get a wider class of inscribed triangles by adding in regularity conditions. Namely, they show that if the curve is differentiable at a point, then any triangle can be inscribed at that point.  %In addition, if the curve is strongly locally monotone at a point (an idea from W.~Stromquist~\cite{MR1045781}), then the point is a vertex of an inscribed equilateral triangle in the curve.

We pause to note that many of the inscribed triangle results were inspired by the square-peg problem: finding four points on any Jordan curve (a simple closed curve in the plane) which are the vertices of a square. This question was posed by O.~Toeplitz in 1911 \cite{Toeplitz}, and progress on this problem has chiefly been extension of the regularity class of simple closed curves for which the square can be found. The interested reader can find numerous articles \cite{FTCWC, MR1133201, MR3184501, Pak-Discrete-Poly-Geom,   MR3731730}  summarizing the problem, and describing the classes of curves for which the square-peg problem has been proved.  There have also been many  papers  \cite{MR3810027, MR4061975,  MR4298749, hugelmeyer2018smooth, MR4298748, MR2038265, matschke2020quadrilaterals, MR4142923, MR2823976} examining  quadrilaterals and polygons inscribed in curves and, more recently, making progress towards solving the rectangular-peg problem (finding rectangles of any aspect ratio inscribed in Jordan curves).  

There are yet more results about inscribed and circumscribed polyhedra in spheres. For example, S.~Kakutani's theorem \cite{MR7267} that a compact convex body in $\R^3$ has a circumscribed cube,  that is, a cube each of whose faces touch the convex body. As Matschke~\cite{MR3184501} notes, most smooth embeddings $S^{k-1}\hookrightarrow \R^k$ do not inscribe a $k$-cube for $k\geq 3$. (Intuitively, the number of equations to fulfill is larger than the degrees of freedom.) Instead asking whether crosspolytopes\footnote{The regular $k$-dimensional polytope is the convex hull of $\{\pm e_i\}$ where $e_i$ are the standard basis vectors in $\R^k$. } are inscribed in such an embedding might be a better generalization of inscribing a square.  V.V.~Makeev~\cite{phd=Makeev} proved the $k=3$ case, and R.N.~Karasev~\cite{karasev2009inscribing} generalized the proof to arbitrary odd prime powers. 
Later, A.~Akoypan and Karasev \cite{MR3016979} used a limit argument to show that a simple convex  polytope admits an inscribed regular octahedron.   In addition, there is the work of G.~Kuperberg \cite{MR1703205} and Makeev \cite{MR2307358} on inscribed and circumscribed polyhedra in convex bodies and spheres.

The inscribed simplex problem can be framed in terms of compactified configuration spaces. We give a short overview of the basics about these spaces in Section~\ref{section:config}.  We consider the compactified configuration space $C_{k+1}[\R^k]$ of ordered $(k+1)$-tuples of points in $\R^k$ as a manifold-with-boundary (and corners). The existence of inscribed simplices in embedded spheres can be viewed as finding the intersection of two submanifolds of $C_{k+1}[\R^k]$. The first is the submanifold of $(k+1)$-tuples 
of points on a $C^\infty$-smooth embedding $\gamma \co S^{k-1}\hookrightarrow \R^k$ of a $(k-1)$-sphere in $\R^k$; the second is the submanifold of simplices in $\R^k$ which are similar to a given simplex $\Delta$, denoted by $\Sim(\Delta)$.    

Theorems about intersections of manifolds are often proved by transversality arguments. There are many examples in the literature~\cite{Guillemin:2010ti,MR314068,MR61823}. In~\cite{Slq}, we provide a framework for this kind of argument adapted to configuration spaces, which we will use again in this paper. Section~\ref{sect:main-method} gives a description of this method and we give a very quick overview here. We consider a smooth embedding $\gamma\co S^l \hookrightarrow \R^k$ of $S^l$ in $\R^k$ Then, in the compactified configuration space $C_n[\R^k]$ of $n$ points in $\R^k$, we consider $C_n[\gamma(S^l)]$, the compactified configuration space of $n$ points on $\gamma(S^l)$; and $Z$ the subspace of tuples of $n$ points in $\R^k$ that satisfy the conditions of a special configuration. Now, suppose there is a different, well known, smooth embedding $i\co S^l\hookrightarrow \R^k$ of $S^l$ in $\R^k$, and assume that the configuration space $C_n[i(S^l)]$ is transverse to $Z$ in $C_n[\R^k]$. We can use Haefliger's theorem \cite{Haefliger:1961wr} to find a differentiable isotopy between $i(S^l)$ and $\gamma(S^l)$ (the differentiable isotopy may need to go through $\R^K$ for $K\geq k$). The key idea is that we ought to be able to vary $C_n[i(S^l)]$ to $C_n[\gamma(S^l)]$ while maintaining the transversality of the intersection with $Z$. To do so, we need to make several assumptions about $Z$ and $C_n[\gamma(S^l)]$ (see Section~\ref{sect:main-method}), and to use technical tools like multijet transversality. In the end, we are able to deduce that there is, for all $m$, a $C^m$-dense\footnote{The density is with respect to the Whitney $C^\infty$-topology, described in Section~\ref{sect:main-method}.} set of smooth embeddings $\gamma'\co S^l\hookrightarrow \R^k$, such that the corresponding embeddings $C_n[\gamma']$ on configuration spaces are $C^0$-close to $C_n[\gamma]$, and that $C_n[\gamma'(S^l)]$ is transverse to $Z$, and moreover $C_n[i(S^l)] \cap Z$ and $C_n[\gamma'(S^l)] \cap Z$ represent the same homology class in $Z$.  We apply this method to the inscribed simplices problem in the final two sections of the paper.

In Section~\ref{sect:simplices}, we first describe a non-degenerate simplex in terms of the distances between distinct vertices. We then use the Cayley-Menger determinant  (see Theorem~\ref{thm:CM}) to give a description of when it is possible to construct a simplex from a set of distances. Secondly, we change our perspective and view a simplex $\Delta$ in $\R^k$ as an ordered $(k+1)$-tuple of distinct points in the configuration space $C_{k+1}(\R^k)$. We then define $\Sim(\Delta)$ to be the space of simplices similar to $\Delta$ (through a translation, rotation, or nonzero scaling of $\R^k$).  In Theorems~\ref{thm:SimD} and \ref{thm:SimD-bdry} we prove that $\Sim(\Delta)$ is a submanifold of $C_{k+1}[\R^k]$ diffeomorphic to $\Or(k)\times [0,\infty)\times \R^k$.

In Section~\ref{sect:inscribed} we apply our method from Section~\ref{sect:main-method}. Given a $C^\infty$-smooth embedding $\gamma\co S^{k-1}\hookrightarrow \R^k$ of $S^{k-1}$ in $\R^k$, we let $C_{k+1}[\gamma(S^{k-1})]$ be the submanifold of $C_{k+1}[\R^k]$ corresponding to $(k+1)$-tuples of points on the embedded sphere $\gamma(S^{k-1})$. In Proposition~\ref{prop:bdry-disjoint}, we show that $\Sim(\Delta)$ and $C_{k+1}[\gamma(S^{k-1}]$ are boundary disjoint in $C_{k+1}[\R^k]$. We then restrict our attention to the standard embedding of $S^{k-1}$ in $\R^k$, with corresponding configuration space $C_{k+1}[S^{k-1}]$. In Proposition~\ref{prop:transverse}, we show that $\Sim(\Delta)$ intersects $C_{k+1}[S^{k-1}]$ transversally and the intersection $\Sim(\Delta)\cap C_{k+1}[S^{k-1}]$ is diffeomorphic to $\Or(k)$.  Since $\Or(k)$ is disconnected, we restrict our attention to $\Sim^+(\Delta)$ which is the submanifold diffeomorphic to $\SO(k)\times [0,\infty) \times \R^k$. We then show in Proposition~\ref{prop:homology}  that 
the top class in $\Sim^+(\Delta)$ corresponds to the top homology class of $\SO(k)$ which also corresponds to the top class of the intersection $\Sim^+(\Delta)\cap C_{k+1}[S^{k-1}]$.   Finally in Theorem~\ref{thm:main-theorem}, we prove there is a dense set of smooth embeddings of $S^{k-1}$ in $\R^k$ which are isotopic to the identity through a differentiable isotopy in $\R^k$, such that the subset of $\Sim^+(\Delta)$ of simplices inscribed in each sphere contains a similar simplex corresponding to each $U\in \SO(k)$.   While we chose to restrict our attention to $\Sim^+(\Delta)$, the same results hold for $\Sim^-(\Delta):=\Sim(\Delta)\setminus \Sim^+(\Delta)$. 
In the special case of a smooth embedding of a circle in the plane, our results show that there are loops of triangles inscribed on an embedded circle $C^0$-close to the given embedding.  (See Corollary~\ref{cor2} for a precise statement.)

%%%%%%%%%%%%%%%%%%%%%%%%%%%%%%%%%%%%%%%%%%%%%%%%%%%
      
\section{Configuration Spaces}
\label{section:config}
The compactified configuration space of $k+1$ points in $\R^k$ is the natural setting for finding inscribed simplices in embedded spheres. In this section we give a very brief overview of compactified configuration spaces. There are many versions of this classical material (see for instance~\cite{MR1259368,MR1258919}), but we follow D. Sinha~\cite{newkey119} as this approach is more appropriate to our work. A discussion similar to the one found below is found in \cite{Slq}.

A reader familiar with configuration spaces may skip much of this section. However we recommend paying attention to the notation we have used for the spaces. Definitions~\ref{def:pijsijk}, \ref{def:config}, \ref{def:function-metric}, \rmark{notation} and \thm{functor} are particularly useful. 

\begin{definition}[\cite{newkey119}]\label{def:openconfig}
Given an $m$-dimensional smooth manifold $M$, let  $M^{\times n}$ denote the $n$-fold product $M$ with itself, and define $\cnmo$ to be the subspace of points  ${\mathbf{p}}=(p_1,\dots, p_n)\in M^{\times n}$ such that $p_j\neq p_k$ if $j\neq k$. Let $\iota$ denote the inclusion map of $\cnmo$ in~$M^{\times n}$. 
\end{definition}

The space $\cnmo$ is an open submanifold of $M^{\times n}$. We next compactify $\cnmo$ to a closed manifold-with-boundary and corners, which we will denote $\cnm$, without changing its homotopy type. The resulting manifold will be homeomorphic to $M^{\times n}$ with an open neighborhood of the fat diagonal
%all subdiagonals 
removed. Recall that the fat diagonal is the subset of $M^{\times n}$ of $n$-tuples for which (at least)
two entries are equal, that is, where
some collection of points comes together at a single point. The construction of $\cnm$ preserves information about the directions and relative rates of approach of each group of collapsing points.

\begin{definition}[\cite{newkey119},\cite{newkey118}]  \label{def:pijsijk}
Given an ordered pair $(i,j)$ of distinct elements from $\{1,\dots,n\}$, let the map $\pij\co C_n(\R^k)\rightarrow S^{k-1}$ send~$\bfp=(\bfp_1,\dots\bfp_n)$ to $\displaystyle\frac{\bfp_i-\bfp_j}{\norm{\bfp_i-\bfp_j}}$, the unit vector in the direction of $\bfp_i-\bfp_j$.  Let $[0,\infty]$ be the one-point compactification of $[0,\infty)$.  Given an ordered triple $(i,j,l)$ of distinct elements in $\{1,\dots,n\}$, let $r_{ijl} \co C_n(\R^k)\rightarrow [0,\infty]$ be the map which sends $\bfp$ to $\displaystyle\frac{\norm{{\bfp_i}-{\bfp_j}} }{\norm{ \bfp_i-{\bfp_l}} }$, the ratio of distances between $\bfp_i$ and $\bfp_j$, and $\bfp_i$ and $\bfp_l$. \end{definition}

 We then compactify $C_n(\R^k)$ as follows:

\begin{definition} [\cite{newkey119}] \label{def:config} 
\begin{compactenum}\item Let $A_n[\R^k]$ be the product $(\R^k)^{n}\times (S^{k-1})^{n(n-1)} \times [0,\infty]^{n(n-1)(n-2)}$.  Define $C_n[\R^k]$ to be the closure of the image of $C_n(\R^k)$ under the map
$$\alpha_n= \iota \times (\pij) \times (s_{ijl}) \co C_n(\R^k)\rightarrow A_n[\R^k].$$
\item  We assume that all manifolds $M$ are smoothly embedded in $\R^k$, which allows us to define the restrictions of the maps $\pij$ and $r_{ijl}$.  Then $\cnmo$ is smoothly embedded in $C_n(\R^k)$ and we define $\cnm$ to be the closure of $\cnmo$ in $M^{n}\times (S^{k-1})^{n(n-1)} \times [0,\infty]^{n(n-1)(n-2)}$. 
We denote the boundary of $\cnm$ by $\bdry\cnm=\cnm\setminus\cnmo$.
\end{compactenum}
\end{definition}

We now summarize some of the important features of this construction, including the fact that $\cnm$ does not depend on the choice of embedding of $M$ in $\R^k$.

\begin{theorem}[\cite{newkey118, newkey119}] \label{thm:config}
\begin{compactenum}
\item[\rm{1.}] $\cnm$ is a manifold-with-boundary and corners with interior $\cnmo$ having the same homotopy type as~$\cnm$. The topological type of $\cnm$ is independent of the embedding of $M$ in $\R^k$, and $\cnm$ is compact when $M$ is.
\item[{\rm 2.}] The inclusion of $\cnmo$ in $M^{\times n}$ extends to a surjective map  from $\cnm$ to $M^{\times n}$ which is a homeomorphism over points in $\cnmo$. 
\end{compactenum}
\end{theorem}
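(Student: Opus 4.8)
The plan is to first observe that $\alpha_n$ is a smooth embedding. Its first factor $\iota\co \cnro\to(\R^k)^n$ is already an embedding onto an open submanifold, and the remaining factors $(\pij)$ and $(r_{ijl})$ are smooth $S^{k-1}$- and $[0,\infty]$-valued functions on $\cnro$, so $\alpha_n(\cnro)$ is the graph of a smooth map over an open subset of $(\R^k)^n$ and hence diffeomorphic to $\cnro$. Thus $\cnro$ is realized as a dense open submanifold of its closure $\cnr$, and likewise $\cnmo$ is dense and open in $\cnm$; the projection $A_n[\R^k]\to(\R^k)^n$ inverts $\alpha_n$ over the interior. All of the content of the theorem therefore concerns the boundary $\bdry\cnm=\cnm\setminus\cnmo$, namely the limit points of sequences of genuine configurations along which some points collide.

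The second step is to analyze these limits. If $\bfp^{(m)}\in\cnro$ has $\iota(\bfp^{(m)})\to\bfp^\infty\in M^{\times n}$ with all coordinates $\pij$ and $r_{ijl}$ convergent, then the collision pattern of $\bfp^\infty$ --- which subsets of indices come together, and the nested order of the relative rates at which they do so --- is organized by a rooted forest of subsets of $\{1,\dots,n\}$ (a ``screen'' in the sense of Fulton--MacPherson, a nested family of subsets). For each such forest one writes a local model: a configuration of the macroscopic points, together with, at each node, a configuration of points in that node's infinitesimal screen (recorded precisely by the relevant $\pij$'s and $r_{ijl}$'s) and a scale parameter ranging over $[0,\epsilon)$. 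The scale parameters are the boundary-defining functions, and a stratum in which $d$ of the nested subsets have been resolved but not yet collapsed is a codimension-$d$ corner. Checking that these charts cover a neighborhood of the boundary, are smoothly compatible, and assemble $\cnm$ into a manifold-with-boundary and corners with interior $\cnmo$ is the main obstacle: it is where the combinatorics of nested collisions must be matched carefully against the closure taken inside the concrete product $A_n[\R^k]$, and it is the technical heart of the construction (Sinha, after Fulton--MacPherson).

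Granting this local structure, the remaining assertions follow quickly. The projection $A_n[\R^k]\to(\R^k)^n$ carries $\cnm$ onto $M^{\times n}$ and restricts to $\iota$ on $\cnmo$; it is a homeomorphism there since $\alpha_n$ is an embedding, and it is surjective onto $M^{\times n}$ because every point of the fat diagonal is a limit of configurations of distinct points of $M$ with prescribed directions and distance ratios (using $\dim M\ge 1$ and connectedness of $M$), which gives part (2). Compactness when $M$ is compact is immediate: $\cnm$ is a closed subset of the product $M^{\times n}\times(S^{k-1})^{n(n-1)}\times[0,\infty]^{n(n-1)(n-2)}$ of compact spaces. The homotopy equivalence $\cnmo\hookrightarrow\cnm$ is the standard fact that the interior of a manifold-with-corners is a deformation retract of the whole, realized by collapsing a collar of $\bdry\cnm$.

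Finally, for independence of the embedding $M\hookrightarrow\R^k$ (and of the ambient dimension $k$), the cleanest route is to identify $\cnm$ with the space obtained from $M^{\times n}$ by iterated real blow-up along the proper transforms of the diagonals $\Delta_S=\{\,\bfp : p_i=p_j\text{ for all }i,j\in S\,\}$, performed in order of increasing $\dim\Delta_S$. Blowing up a manifold-with-corners along a suitably positioned submanifold again yields a manifold-with-corners; the blow-down maps compose to the surjection $\cnm\to M^{\times n}$ above, and the exceptional divisors are exactly the boundary strata found in the second step, the matching being made by comparing the spherical and ratio coordinates with the exceptional normal directions. Since this blow-up recipe refers only to $M$ and the combinatorics of its diagonals, it is manifestly independent of the chosen embedding, which is the last claim of part (1).
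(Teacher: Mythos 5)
This theorem is imported from Sinha \cite{newkey119} (see also \cite{newkey118}); the paper gives no proof of it, so there is no in-paper argument to compare against. Your outline faithfully reproduces the route of those references (after Fulton--MacPherson and Axelrod--Singer): $\alpha_n$ embeds $\cnro$ as the graph of a smooth map, so the projection of $A_n[\R^k]$ back to $(\R^k)^{n}$ handles everything over the interior; the boundary is organized by nested families of colliding index sets, with local models whose scale parameters serve as boundary-defining functions; compactness, surjectivity onto $M^{\times n}$, and the homotopy equivalence of $\cnmo$ with $\cnm$ then follow formally; and independence of the embedding is obtained by matching the closure with the intrinsic iterated-blow-up model of $M^{\times n}$ along its diagonals. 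The caveat is that you have correctly located, but not discharged, essentially all of the mathematical content: the verification that the closure of $\alpha_n(\cnmo)$ in the ambient product is \emph{exactly} the union of your local models (no extraneous limit points), that the charts are smoothly compatible so that the strata really assemble into a manifold-with-corners, and that these strata coincide with the exceptional divisors of the blow-up. That verification occupies the bulk of \cite{newkey119} and is where such arguments typically go wrong, so as a self-contained proof your text is a plan rather than a proof, though as a reading of the cited construction it is accurate. Two minor points: density of $\cnmo$ in $\cnm$ is automatic from the definition of $\cnm$ as a closure, and a codimension-$d$ corner corresponds to $d$ nested scale parameters actually vanishing (the collisions having occurred), not to collisions ``resolved but not yet collapsed.''
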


\begin{remark}\label{rmark:notation}
When discussing points in $\cnr$ or $\cnm$, it is easy to become confused. We pause to clarify notation.
\begin{itemize}
\item A point in $\R^k$ is denoted by $\bfx=(x_1, \dots, x_k)$, where each $x_i\in\R$.
\item Points in $(\R^k)^{\times n}$ are also denoted by $\bfx$, where $\bfx = (\bfx_1, \dots, \bfx_n)$ and each $\bfx_i\in\R^k$. (It will be clear from context which is meant.)
\item A point in $\cnr$ or $\cnm$, is denoted $\x$. 
 \end{itemize}
\end{remark}

The space $\cnm$ may be viewed as a polytope with a combinatorial structure based on the different ways groups of points in $M$ can come together. This structure defines a {\em stratification} of $\cnm$ into a collection of closed faces of various dimensions whose intersections are members of the collection. 
Full details can be found in \cite{newkey118, newkey119}. The main structure we will consider is the $(0,1,\dots,k)$-face of $\bdry\cnm$. This is the boundary component where all the points come together at the same time.

Any pair $\bfp$, $\bfq$ of disjoint points in $\R^k$ has a direction $(\bfp-\bfq)/\norm{\bfp-\bfq}$ associated to it, while every triple of disjoint points $\bfp$, $\bfq$, $\bfr$ has a corresponding distance ratio $\norm{\bfp-\bfq}/\norm{\bfp-\bfr}$. One way to think of the coordinates of $\cnm$ is that they extend the definition of these directions and ratios to the boundary.

\begin{theorem} [\cite{newkey118, newkey119}] \label{thm:configgeom} Given a manifold $M \subset \R^k$, then in any configuration of points $\p \in \cnm$ the following holds.
\begin{compactenum}
\item  Each pair of points $\bfp_i$, $\bfp_j$ has associated to it a well-defined unit vector in $\R^k$ giving the direction from $\bfp_i$ to $\bfp_j$. If the pair of points project to the same point $\bfp$ of $M$, this vector lies in $T_\bfp M$. 
\item Each triple of points $\bfp_i$, $\bfp_j$, $\bfp_k$ has associated to it a well-defined scalar in $[0,\infty]$ corresponding to the ratio of the distances $\norm{\bfp_i - \bfp_j}$ and $\norm{\bfp_i - \bfp_k}$. If any pair of $\{\bfp_i, \bfp_j,\bfp_k\}$ projects to the same point in $M$ (or all three do), this ratio is a limiting ratio of distances.
\item The functions $\pi_{ij}$ and $r_{ijl}$ are continuous on all of $\cnm$ and smooth on each face of $\bdry\cnm$. 
\end{compactenum}
\end{theorem}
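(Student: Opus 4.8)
The plan is to reduce all three parts to the defining property of $\cnm$ from Definition~\ref{def:config}: it is the closure, inside the ambient product $M^{n}\times(S^{k-1})^{n(n-1)}\times[0,\infty]^{n(n-1)(n-2)}$, of the image of $\cnmo$ under the map sending a genuine configuration to its tuple of positions, pairwise directions $\pij$, and distance ratios $r_{ijl}$. Consequently a point $\p\in\cnm$ carries coordinates $(\bfp,(u_{ij}),(\rho_{ijl}))$ with $\bfp=(\bfp_1,\dots,\bfp_n)\in M^{n}$, $u_{ij}\in S^{k-1}$, $\rho_{ijl}\in[0,\infty]$, and $\p$ is the limit of some sequence of genuine configurations $\bfp^{(m)}\in\cnmo$ with $\bfp_i^{(m)}\to\bfp_i$. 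Since, by construction, $\pij$ and $r_{ijl}$ are the restrictions to $\cnm$ of coordinate projections from the ambient product, we get $u_{ij}=\pij(\p)=\lim_m\pij(\bfp^{(m)})$ and $\rho_{ijl}=r_{ijl}(\p)=\lim_m r_{ijl}(\bfp^{(m)})$ along \emph{every} such defining sequence. This yields the existence and well-definedness assertions in parts (1) and (2), and, since coordinate projections are continuous, the continuity assertion in (3).

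For the tangent-space claim in (1), suppose $\bfp_i$ and $\bfp_j$ both equal a point $\bfp\in M$. I would fix a local parametrization $\phi\co U\to M\subset\R^k$ with $\phi(0)=\bfp$, write $\bfp_i^{(m)}=\phi(a_m)$ and $\bfp_j^{(m)}=\phi(b_m)$ with $a_m,b_m\to 0$, and use the first-order estimate $\phi(a_m)-\phi(b_m)=D\phi(0)(a_m-b_m)+o(\norm{a_m-b_m})$ coming from continuity of $D\phi$. Since $\phi$ is an immersion, $D\phi(0)$ is injective, so $\norm{D\phi(0)(a_m-b_m)}\geq c\,\norm{a_m-b_m}$ for some $c>0$; hence, after normalizing, any subsequential limit $w$ of $(a_m-b_m)/\norm{a_m-b_m}$ (which exists by compactness of the parameter-space unit sphere) produces a limit $D\phi(0)w/\norm{D\phi(0)w}$ of the secant directions, and this vector lies in $T_\bfp M$. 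As $u_{ij}$ is the \emph{unique} such limit, $u_{ij}\in T_\bfp M$. For the analogous part of (2), one just observes that when a pair among $\{\bfp_i,\bfp_j,\bfp_l\}$ (or the whole triple) collapses to a common point of $M$, the numerator $\norm{\bfp_i^{(m)}-\bfp_j^{(m)}}$ goes to $0$, and likewise the denominator $\norm{\bfp_i^{(m)}-\bfp_l^{(m)}}$ if that pair also collapses; so $\rho_{ijl}$ is literally a limiting ratio of distances, equal to $\infty$ precisely when only the numerator collapses.

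What remains is to upgrade the continuity in (3) to smoothness of $\pij$ and $r_{ijl}$ on each face of $\bdry\cnm$. For this I would not argue from the ``closure of a graph'' description alone, but invoke the explicit description of the stratification and the smooth structure on the faces of $\cnm$ from \cite{newkey118,newkey119}: each closed face is a smooth manifold-with-corners whose charts are compatible with the ambient product, so the coordinate projections onto the $S^{k-1}$- and $[0,\infty]$-factors restrict to smooth maps on each face (using the standard smooth structure near the point at infinity of $[0,\infty]$). This is the only genuinely non-formal step, and it is exactly why the theorem is attributed to \cite{newkey118,newkey119}; parts (1), (2), and the continuity half of (3) are then bookkeeping with the definitions plus the one chart computation above.
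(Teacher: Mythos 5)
The paper offers no proof of this statement at all: it is imported verbatim from Sinha's construction \cite{newkey118,newkey119} as background, so there is no internal argument to compare yours against. Judged on its own terms, your sketch is correct. Reducing well-definedness and continuity to the fact that $\pi_{ij}$ and $r_{ijl}$ are, by Definition~\ref{def:config}, restrictions of the coordinate projections from the ambient product $M^{n}\times(S^{k-1})^{n(n-1)}\times[0,\infty]^{n(n-1)(n-2)}$ is exactly the right move, and your chart computation for the tangency claim in (1) is sound: the expansion $\phi(a_m)-\phi(b_m)=D\phi(0)(a_m-b_m)+o(\norm{a_m-b_m})$ plus the lower bound $\norm{D\phi(0)v}\geq c\norm{v}$ from injectivity of $D\phi(0)$ shows every subsequential limit of the secant directions lies in the image of $D\phi(0)$, i.e.\ in $T_\bfp M$, and uniqueness of the limit (it is a coordinate of the fixed boundary point) finishes it. The one substantive item you do not actually prove is smoothness of $\pi_{ij}$ and $r_{ijl}$ on each face of $\bdry\cnm$; your appeal to the stratification and chart structure of \cite{newkey118,newkey119} is mildly circular as a self-contained argument, since those are precisely the sources the theorem is attributed to --- but you flag this explicitly, and the paper itself defers the entire statement to the same references, so this is not a gap relative to the paper's own standard.
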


It turns out that for connected manifolds of dimension at least $2$, the combinatorial structure of the strata of $\cnm$ depends only on the number of points. Regardless of dimension, this construction and division of $\bdry \cnm$ into strata is functorial in the following sense.

\begin{theorem}[\cite{newkey119}]\label{thm:functor}
Suppose $M$ and $N$ are embedded submanifolds of $\R^k$ and $f\co  M\hookrightarrow N$ is an embedding. This induces an embedding of manifolds-with-corners called the evaluation map $C_n[f] \co C_n[M]\hookrightarrow C_n[N]$ that respects the stratifications.  This map is defined by choosing the ambient embedding of $M$ in $\R^k$ to be the composition of $f$ with
the ambient embedding of $N$.
\end{theorem}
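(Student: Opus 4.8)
To prove this, the plan is to build the evaluation map directly --- first on the open configuration spaces, then on their closures --- and to use the fact, recorded in \thm{config}(1), that $\cnm$ does not depend on the chosen embedding of $M$ in $\R^k$. Accordingly, I would equip $M$ with the ambient embedding $\iota_N\circ f\co M\hookrightarrow\R^k$, where $\iota_N\co N\hookrightarrow\R^k$ is the given embedding of $N$; by \thm{config}(1) this changes neither the homeomorphism type nor the smooth manifold-with-corners structure of $\cnm$, while it makes the relevant maps strictly compatible. The coordinatewise map $C_n(f)\co\cnmo\to C_n(N)$, $(p_1,\dots,p_n)\mapsto(f(p_1),\dots,f(p_n))$, is well-defined because $f$ is injective; it is the restriction to $\cnmo$ of the smooth embedding $f^{\times n}\co M^{\times n}\to N^{\times n}$, and so is itself a smooth embedding. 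Moreover, distances and directions among pairs and triples of points are computed in the common ambient $\R^k$, so for every $\p\in\cnmo$ and all distinct $i,j,l$ one has $\pij(\p)=\pij(C_n(f)(\p))$ and $\rijl(\p)=\rijl(C_n(f)(\p))$; equivalently, writing $\Phi:=f^{\times n}\times\id\times\id$ for the obvious map between the ambient product spaces of Definition~\ref{def:config}, the identity $\Phi\circ\alpha_n=\alpha_n\circ C_n(f)$ holds on $\cnmo$.

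Next I would extend $\Phi$ to the compactifications. The map $\Phi$ is a topological embedding, being the product of the topological embedding $f^{\times n}$ with identity maps. Hence, using continuity of $\Phi$, the identity $\Phi\circ\alpha_n=\alpha_n\circ C_n(f)$, and $C_n(f)(\cnmo)\subseteq C_n(N)$,
\[
\Phi(\cnm)=\Phi\bigl(\overline{\alpha_n(\cnmo)}\bigr)\subseteq\overline{\alpha_n\bigl(C_n(f)(\cnmo)\bigr)}\subseteq\overline{\alpha_n(C_n(N))}=C_n[N],
\]
so $\Phi$ restricts to a continuous injection $\cnf\co\cnm\to C_n[N]$, which is a homeomorphism onto its image because the restriction of a topological embedding to a subspace is again a topological embedding. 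By construction $\cnf$ extends $C_n(f)$ and is compatible with the coordinate functions $\pij$ and $\rijl$.

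The remaining --- and principal --- point is to verify that $\cnf$ is smooth, has everywhere injective differential including at boundary and corner points, and carries each stratum of $\cnm$ into a stratum of $C_n[N]$ of the same combinatorial type (in particular, the $(0,1,\dots,k)$-face of $\bdry\cnm$ into that of $\bdry C_n[N]$). For this I would pass to the explicit local charts of $\cnm$ from \cite{newkey119}: near a point of the stratum indexed by a given nested collision pattern, $\cnm$ carries smooth coordinates consisting of a base point in $M$, one nonnegative scaling parameter per collision level, and normalized relative directional data. In these coordinates $\cnf$ acts by applying $f$ to the base point, leaving the scaling parameters fixed, and transporting the directional data by the normalization of $df$; since $f$ is a smooth embedding, $df$ is fiberwise injective, so the chart expression is a smooth immersion, and it manifestly preserves the collision pattern, hence the stratification. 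Combined with the previous paragraph this makes $\cnf$ an embedding of manifolds-with-corners respecting stratifications. (Alternatively, one identifies our $\cnm$ with Sinha's model, for which functoriality along embeddings is already established in \cite{newkey119}; the identity $\Phi\circ\alpha_n=\alpha_n\circ C_n(f)$ on the dense interior $\cnmo$ then forces our $\cnf$ to agree with Sinha's induced map.)

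The hard part is genuinely this last step. Everything on the interiors is elementary; all of the content lies in the behavior of the compactification ``at infinity'': one must confirm that the limiting directions of colliding groups of points are recorded faithfully by $f$, and this is precisely where the hypothesis that $f$ is a smooth embedding, rather than merely continuous or injective, is used --- via the injectivity of $df$.
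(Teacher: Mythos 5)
The paper gives no proof of this theorem --- it is quoted from Sinha \cite{newkey119}, and the only construction it records is the theorem's final sentence, which defines $C_n[f]$ exactly as you do: re-embed $M$ ambiently as $\iota_N\circ f$ so that the $\pij$ and $\rijl$ data of $M$ and of $f(M)\subset N$ literally coincide, then restrict the resulting inclusion of ambient product spaces to the closures. Your elaboration of this is correct and honestly locates all the substance in the boundary-chart analysis, which you (like the paper) import from \cite{newkey119}; the one point to flag is that \thm{config}(1) as stated asserts only independence of the \emph{topological} type of $\cnm$ from the ambient embedding, so your appeal to independence of the smooth manifold-with-corners structure (and likewise the chart description in which $df$-injectivity enters) is borrowing a stronger fact from Sinha than the paper itself records.
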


For an embedding $f \colon M \hookrightarrow N$, the image of the induced embedding $C_n[f]\colon C_n[M] \hookrightarrow C_n[N]$ will be denoted by $C_n[f(M)]$.

\begin{corollary}\label{cor:smooth}
Let $f\co\R^k\rightarrow \R^k$ be a smooth diffeomorphism. Then the induced map of configuration spaces $\cnf\co
\cnr\rightarrow\cnr$ is also a smooth diffeomorphism (on each face of $\cnr$).
\end{corollary}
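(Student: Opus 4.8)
\emph{Proof proposal.} The plan is to realize $\cnf$ as a genuine diffeomorphism on each face by exhibiting its inverse directly from the functoriality of the compactified configuration space construction, and then checking smoothness stratum by stratum.

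Since $f\co\R^k\to\R^k$ is a diffeomorphism, it is in particular a smooth embedding, so \thm{functor} (applied with $M=N=\R^k$) produces the evaluation map $\cnf\co\cnr\hookrightarrow\cnr$, an embedding of manifolds-with-corners respecting the stratification; by construction it is continuous on all of $\cnr$ and smooth on each face. Applying \thm{functor} a second time to the inverse diffeomorphism $f^{-1}$ yields $C_n[f^{-1}]\co\cnr\hookrightarrow\cnr$ with the same properties. I would then identify these two maps as mutual inverses. On the open dense subset $\cnro\subset\cnr$ (dense because $\cnr$ is by definition the closure of the image of $\cnro$ under $\alpha_n$), the evaluation map of \thm{functor} is, in the $(\R^k)^n$-coordinates, just the coordinatewise map $\bfp=(\bfp_1,\dots,\bfp_n)\mapsto(f(\bfp_1),\dots,f(\bfp_n))$, with the $\pij$- and $r_{ijl}$-coordinates determined accordingly. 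Hence both composites $C_n[f^{-1}]\circ\cnf$ and $\cnf\circ C_n[f^{-1}]$ restrict to the identity on $\cnro$; since they are continuous on all of $\cnr$ and $\cnro$ is dense, each composite is the identity map on $\cnr$. Thus $\cnf$ is a homeomorphism with inverse $C_n[f^{-1}]$.

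It remains to upgrade this to a diffeomorphism on each face. Because $\cnf$ is a stratification-respecting homeomorphism whose inverse $C_n[f^{-1}]$ also respects the stratification, $\cnf$ carries each face of $\cnr$ homeomorphically onto a face. On any such face $F$, both $\cnf$ and its inverse restrict to smooth maps (this is part of the content of \thm{functor}), so $\cnf|_F$ is a smooth bijection with smooth inverse, hence a diffeomorphism onto its image face, which is exactly the assertion. The only point requiring care is the identification of the inverse: one must confirm that the evaluation map of \thm{functor} really does restrict on the interior $\cnro$ to the naive coordinatewise map, which follows at once from its definition via the ambient embedding $f\circ(\text{standard embedding of }\R^k)$; this is what lets us sidestep a separate verification that $C_n[-]$ is functorial under composition of embeddings.
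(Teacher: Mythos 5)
Your argument is correct and is exactly the route the paper intends: the paper's proof is the one-line ``immediate corollary of Theorem~\ref{thm:functor},'' and your proposal simply supplies the details (apply the functoriality to $f$ and $f^{-1}$, identify the composites as the identity via density of $\cnro$, and conclude smoothness of the inverse face by face). No gaps; you have just written out what the paper leaves implicit.
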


\begin{proof} This is an immediate corollary of the previous theorem. %\cite{MR2099074}, Theorem 4.7.
\end{proof}

Finally, we will need a metric on the set of evaluation maps $C_n[f] \co C_n[M]\hookrightarrow C_n[N]$. The definition of compactified configuration spaces allows us to view $C_n[N] \subset (\R^k)^{n} \times (S^{k-1})^{n(n-1)} \times [0,\infty]^{n(n-1)(n-2)}$ as a metric space with the sup norm.  If we define the mapping $\mbox{\rm pr}_i$ to be the projection onto the $i$th space of the product, then this naturally leads to a metric on the set of continuous functions $C^0(C_n[M], C_n[N])$.

\begin{definition} With the above assumptions, the metric on the set  $C^0(C_n[M], C_n[N])$ is given by 
$$\|F - G\|_0 = \sup_{\p \in C_n[M]} \{\| \mbox{\rm pr}_i(F(\p)) - \mbox{\rm pr}_i(G(\p))\| \mid \mbox{\rm for all\ }i\}.$$
Thus, given a embeddings $f, g\co M\hookrightarrow N$ we say that the corresponding maps on configuration spaces $C_n[f],C_n[g]\co C_n[M]\hookrightarrow C_n[N]$ are {\em $C^0$-close} if for all $\epsilon >0$, we have $\|C_n[f]-C_n[g]\|_0<\epsilon$.
\label{def:function-metric}
\end{definition}

%%%%%%%%%%%%%%%%%%%%%%%%%%%%%%%
%%%%%%%%%%%%%%%%%%%%%%%%%%%%%%%%%%%%%%%
\section{Finding special configurations with multijet transversality}
\label{sect:main-method}

In this section we set up a general method for tackling problems where we seek a special configuration of $n$ points on a compact manifold $M$ that is smoothly embedded in $\R^k$ (cf. \cite{Slq}).  In both the square-peg problem and the inscribed simplex problem, $M$ is a sphere (either $S^1$ or $S^{k-1}$). We thus denote the $C^\infty$ smooth embedding by $\gamma\co S^l \hookrightarrow \R^k$, and the corresponding  compactified configuration space of $n$ points on $\gamma(S^l)$ is $C_{n}[\gamma(S^l)]$. We let $Z$ denote the subspace of tuples of $n$ points in $\R^k$ that satisfy the conditions of a special configuration. For example, in \cite{Slq}, $Z$ is the set of all square-like quadrilaterals in $\R^k$. In this paper we are interested in $Z=\Sim(\Delta)$ (see Section~\ref{sect:simplices}), which is the set of all $(k+1)$-simplices in $\R^k$ which are similar to a given non-degenerate simplex $\Delta$. 

The central idea is as follows: suppose there is a different, well understood, smooth embedding of $S^l$ in $\R^k$ (via $i\co S^l\hookrightarrow \R^k$), and assume that the corresponding configuration space $C_n[i(S^l)]$ is transverse to $Z$ in $C_n[\R^k]$. Also assume that $i(S^l)$ is smoothly homotopy equivalent to $\gamma(S^l)$ in $\R^k$. Standard transversality arguments should allow us to vary $C_n[i(S^l)]$ to $C_n[\gamma(S^l)]$ while maintaining the transversality of the intersection with $Z$.    There are various technical obstacles to overcome, all of which are handled in detail in \cite{Slq}.  Here are the key steps.

{\bf Step 1:} It is possible that special configurations on $\gamma(S^l)$ shrink away to the boundary of $C_n[\R^k]$ during the isotopy.  To prevent this, we first prove 
\begin{compactenum} \item[(a)] that $n$-tuples of points in $\R^k$ satisfying the geometric condition, $Z$,  are a submanifold in $\cnro$, and that $\bdry Z\subset \bdry \cnr$;
\item[(b)] that $C_n[\gamma(S^l)]$ and $Z$ are boundary-disjoint.
\end{compactenum} 

For $Z=\Sim(\Delta)$, we prove 1(a) in Theorems~\ref{thm:SimD} and \ref{thm:SimD-bdry}. We prove 1(b) in Proposition~\ref{prop:bdry-disjoint}.

{\bf Step 2:} For a standard embedding $i\co S^l\hookrightarrow \R^k$,  we need to do two things
\begin{compactenum} \item[(a)] Prove that the intersection between  $C_n[i(S^l)]$ and $Z$ is non-empty and transverse (in other words, $C_n[i]\pitchfork Z$). 
\item[(b)] Compute the homology class of the intersection $C_n[i(S^l)] \cap Z$ in~$Z$.
\end{compactenum}

For our inscribed simplex problem, we use the standard embedding $\id \co S^{k-1}\hookrightarrow \R^k$  of $S^{k-1}$ in $\R^k$. We prove 2(a) in Proposition~\ref{prop:transverse} and 2(b) in Proposition~\ref{prop:homology}.

{\bf Step 3:} Note that in order to apply our transversality arguments, we need to be able to perturb $C_n[\gamma(S^l)]$ so the intersection of $Z$ remains transverse. However, there is no guarantee that the perturbed submanifold consists of configurations on a perturbed smooth embedding of $S^l$ in $\R^k$. We deal with this issue by applying the multijet transversality theorem~\cite[Theorem II.4.13]{Golubitsky:1974iu}. This allows us to conclude (see \cite{Slq} Theorem 17) that for any $\epsilon >0$,  there is a  $C^\infty$-open neighborhood of $\gamma$ in which there is, for all $m$, a $C^m$-dense set of smooth embeddings $\gamma'\co S^l \hookrightarrow \R^k$, such that $\|C_n[\gamma'] - C_n[\gamma]\|_0< \epsilon$, and $C_n[\gamma'] \transverse Z$, and for which $\bdry Z$ and $\bdry C_n[\gamma'(S^l)]$ are disjoint in $\bdry \cnr$. 

In order to fully appreciate Step 3, first recall that the metric on the set $C^0(C_n[S^l], C_n[\R^k])$ was given in  Definition~\ref{def:function-metric}. Second, to understand the statement about density, we need to give the topology of the spaces we are working in. In general, for manifolds $M(=S^l)$ and $N(=\R^k)$, the space $C^\infty(M,N)$ has the Whitney $C^\infty$-topology (see for instance \cite{Hirsch}).   The sets of the form $$\mathcal{N}^t(f; (U,\phi), (V,\psi), \delta)$$ give a subbasis for the Whitney $C^t$-topology on $C^t(M,N)$ (where $t$ is finite). 
This subbasis is the subset of functions $g\colon M\to N$ that are smooth, and for coordinate charts $\phi\colon (U' \subset M) \to (U\subset \R^m)$ and $\psi\colon 
(V' \subset N) \to (V\subset \R^k)$ and $K \subset U$ compact with $g(\phi(K)) \subset V'$, then we have, for all $s \leq r$, and all $x \in \phi(K)$, 
$$\| D^s (\psi g \phi^{-1})(x) - D^s(\psi f \phi^{-1})(x)\| < \delta. $$
Here, $D^s F$ for a function $F \colon (U\subset \R^m) \to (V \subset \R^k)$ is the $k$-tuple of the $s$th  homogeneous parts of the Taylor series representations of
the projections of $F$.   Finally, the subspace $C^\infty(M,N)$ has the Whitney $C^\infty$-topology by taking the union of all subbases for all $t\geq 0$. 

Note that Step 3 can be applied immediately to the inscribed simplex problem. 

{\bf Step 4:} We need to deform standard spheres into spheres of interest and then consider what happens on the level of configuration spaces. We know precisely when such a deformation of spheres exists due to a result of A.~Haefliger (which we have stated in a form useful to us). 
\begin{theorem}[\cite{Haefliger:1961wr}] 
Any two differentiable embeddings of $S^l$ in $\R^k$ are homotopic through a differentiable
isotopy in $\R^K \supset \R^k$ when $K > 3(l+1)/2$.
\label{thm:isotopy}
\end{theorem}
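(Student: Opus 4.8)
The plan is to derive the statement from the classical fact that spheres unknot in high codimension. It is enough to show that every differentiable embedding $f\co S^l\hookrightarrow\R^K$ is differentiably isotopic, through an ambient isotopy of $\R^K$, to the standard equatorial embedding $i_0$. Ambient isotopy being an equivalence relation, any two embeddings $f,g\co S^l\hookrightarrow\R^k\subset\R^K$ are then isotopic to one another in $\R^K$, and the hypothesis $K>k$ is exactly what lets us regard the given embeddings into $\R^k$ as embeddings into $\R^K$ with room to spare.

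First I would normalize $f$ near a point. Choose a closed hemisphere $D_-\subset S^l$ with complementary hemisphere $D_+$ and common boundary sphere $S^{l-1}$. Since the space of smooth embeddings of a disk in $\R^K$ is connected, an ambient isotopy of $\R^K$ makes $f$ agree with $i_0$ on a neighborhood of $D_-$, and a further isotopy pushes $f(D_+)$ into a half-space meeting $i_0(D_-)$ only along $S^{l-1}$. The problem is thereby reduced to a relative one: show that the embedded disk $f|_{D_+}\co D^l\hookrightarrow\R^K$, which is standard on $\partial D^l=S^{l-1}$, is isotopic rel boundary to the standard embedded disk.

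The obstruction to this is Haefliger's knotting invariant, and the mechanism for killing it is general position together with an engulfing (generalized Whitney) argument. A concordance between $f|_{D_+}$ and the standard disk is a properly embedded $(l+1)$-manifold $W$ in $\R^K\times[0,1]$ which, after a generic perturbation rel boundary, has self-intersection set of dimension $2(l+1)-(K+1)=2l+1-K$; when $2K>3(l+1)$ this double-point set is low-dimensional enough to be engulfed and removed, converting the concordance into an isotopy rel boundary. (Dually, $f$ determines a $\Z_2$-equivariant map out of the deleted configuration space $C_2(S^l)$ to $S^{K-1}$, and the first obstruction to an equivariant nullhomotopy sits in a cohomology group that vanishes in the same metastable range.) The isotopy-extension theorem then promotes the resulting isotopy of disks to an ambient isotopy of $\R^K$, completing the argument.

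The crux, and the only genuinely hard step, is precisely this last one: showing that the self-intersections of the concordance can be removed. This is where the metastable inequality $K>3(l+1)/2$ is used essentially, and where it is sharp, since Haefliger's embedding $S^3\hookrightarrow\R^6$ is knotted and there $K=3(l+1)/2$. As this delicate differential-topological argument is carried out in full in Haefliger's paper, we do not reproduce it and instead cite \cite{Haefliger:1961wr}.
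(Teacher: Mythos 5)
The paper offers no proof of this statement: it is quoted, in a form convenient for the authors, directly from Haefliger's paper, and your proposal likewise defers the only substantive step---unknotting of $S^l$ in $\R^K$ in the metastable range $2K>3(l+1)$---to that same reference, so the two are essentially the same. Your surrounding outline (reduce by transitivity to isotoping a single embedding to the standard one, normalize on a hemisphere, then remove the double points of the track of a generic homotopy---note a concordance is by definition already embedded, so what gets perturbed and repaired is a homotopy, not a concordance) is a reasonable account of how the cited result is actually established, and the reductions you do spell out are correct.
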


We use Theorem~\ref{thm:isotopy} to find a smooth map $E\co S^l \cross I \rightarrow \R^K$ with $E(-,0)=i$ our standard embedding and $E(-,1) = \gamma'$ (where $K$ may be greater than our original $k$).  Recalling that both $C_n[i]$ and $C_n[\gamma']$ are transverse to $Z$ allows us to conclude that using functorality, we get a homotopy $H\co C_n(S^l) \times I\rightarrow C_n(\R^K)$ with  $H(-,0)=C_n[i]$ and $H(-,1)=C_n[\gamma']$. In \cite{Slq}, we then use the Transversality Homotopy Extension Theorem (see \cite{Guillemin:2010ti}), to find a map $H'$ homotopic to $H$ and with $H'(-,0)=H(-,0)$, $H'(-,1)=H(-,1)$, and $H'$ is transverse to  $Z$. This then implies that in $Z$, the intersections $C_n[i(S^l)] \cap Z$ and $C_n[\gamma'(S^l)] \cap Z$  represent the same homology class. In other words we have sketched a proof of the following:

\begin{theorem}[\cite{Slq} Theorem  20] Suppose there are two embeddings $\eta, i:S^l\hookrightarrow \R^k$ of an $l$-sphere in $\R^k$. Assume that $Z$ is a closed topological space contained in $\cnr$ such that $Z \cap \cnro$ is a submanifold of $\cnro$, $\bdry Z\subset \bdry\cnr$, and $\bdry Z$ is disjoint from $\bdry C_n[i(S^l)]$.  Also assume that both $C_n[i]$ and $C_n[\eta]$ are transverse to $Z$. Then in $Z$,  the homology class of $C_n[i(S^l)]\cap Z$ and $C_n[\eta(S^l)]\cap Z$  are equal.
\label{thm:homology}
\end{theorem}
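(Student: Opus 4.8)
The plan is to realize the two intersections as the two ends of an explicit cobordism inside $Z$, exactly along the lines of Step~4 above; the complete argument, with all technical details, is given in \cite{Slq}, and I sketch it here. First I would apply Haefliger's theorem (\thm{isotopy}) to produce a differentiable isotopy $E\co S^l\times I\to\R^K$ with $E(-,0)=i$ and $E(-,1)=\eta$, for some $K\geq k$ large enough that the theorem applies. Note that $Z$ still makes sense inside $C_n[\R^K]$: the ambient inclusion $\R^k\hookrightarrow\R^K$ induces, by \thm{functor}, a stratification-respecting embedding $\cnr\hookrightarrow C_n[\R^K]$, under which $Z\cap\cnro$ is still a submanifold of $C_n(\R^K)$ and $\bdry Z\subset\bdry C_n[\R^K]$. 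Applying the evaluation-map functor of \thm{functor} to $E$ slicewise in $t$ then yields a homotopy $H\co C_n[S^l]\times I\to C_n[\R^K]$ with $H(-,0)=C_n[i]$ and $H(-,1)=C_n[\eta]$, both ends transverse to $Z$ by hypothesis.

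Next I would invoke the Transversality Homotopy Extension Theorem (see \cite{Guillemin:2010ti}) to deform $H$, rel its two ends, into a homotopy $H'$ that is transverse to $Z$ throughout; since the ends are already transverse, this perturbation is supported away from $C_n[S^l]\times\{0,1\}$. Then $W:=(H')^{-1}(Z)$ is compact (by compactness of $C_n[S^l]$ and closedness of $Z$) and is a manifold-with-boundary, with $\bdry W=\bigl(C_n[i]^{-1}(Z)\times\{0\}\bigr)\sqcup\bigl(C_n[\eta]^{-1}(Z)\times\{1\}\bigr)$, and the restriction $H'|_W\co W\to Z$ is a singular cobordism in $Z$ whose two ends map homeomorphically (because $C_n[i]$ and $C_n[\eta]$ are embeddings) onto $C_n[i(S^l)]\cap Z$ and $C_n[\eta(S^l)]\cap Z$. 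By the standard cobordism argument these two intersections therefore represent the same class in $H_*(Z)$ — with $\Z/2$ coefficients in general, and with $\Z$ coefficients once compatible orientations are carried along the transverse intersections.

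The delicate point — and the reason for the hypotheses $\bdry Z\subset\bdry\cnr$ and $\bdry Z\cap\bdry C_n[i(S^l)]=\emptyset$, together with their analogues for $\eta$ and for every intermediate $E(-,t)$ — is to guarantee that no special configuration escapes into $\bdry Z$ during the isotopy, equivalently that $W$ lies entirely in the interior $Z\cap\cnro$ (so that $W$ is an honest manifold and meets no corner stratum of $C_n[\R^K]$). This is what boundary-disjointness buys: since $Z\cap\bdry C_n[\R^K]=\bdry Z$ and the functorial evaluation maps send boundary strata to boundary strata, if $C_n[i(S^l)]$, $C_n[\eta(S^l)]$, and every $C_n[E(-,t)(S^l)]$ avoid $\bdry Z$, then $H$ already avoids $Z$ on $\bdry C_n[S^l]\times I$ and the transverse preimage $W$ is forced into the interior. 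Checking that this disjointness is an open condition which persists through the perturbations of Step~3 and the compact isotopy $E$ — and not the routine transversality bookkeeping — is the real obstacle, and, together with the use of multijet transversality in Step~3, it is what occupies the bulk of \cite{Slq}. Here the statement is recorded only so that it may be applied, via Steps~1--3, in \sect{inscribed}.
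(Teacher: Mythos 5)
Your proposal follows the same route the paper sketches in Step~4 of Section~\ref{sect:main-method}: Haefliger's theorem to produce the isotopy $E$, functoriality (\thm{functor}) to get the homotopy $H$ of configuration-space maps, the Transversality Homotopy Extension Theorem to replace $H$ by a transverse $H'$ rel endpoints, and the resulting preimage cobordism in $Z$ to equate the homology classes, with the boundary-disjointness hypotheses keeping everything in the interior. This matches the paper's argument (which likewise defers the full technical details to \cite{Slq}), so there is nothing further to add.
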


In this paper we will not be making full use of Haefliger's Theorem in Step 4. Instead, we will restrict our attention to smooth embeddings of $S^{k-1}$ in $\R^k$ which are differentiably isotopic to the identity through a differentiable isotopy in $\R^k$. The conclusion of Theorem~\ref{thm:homology} still holds for such embeddings. Putting all of our steps together allows us to conclude the following theorem.

\begin{theorem}[\cite{Slq} Theorem 21] \label{thm:method}
Suppose $\gamma\co S^l\hookrightarrow \R^k$ is a smooth embedding of $S^l$ in $\R^k$, with a corresponding embedding of compactified configuration spaces $C_n[\gamma]\co C_n[S^l]\hookrightarrow C_n[\R^k]$.
Assume that $Z$ is a closed topological space contained in $\cnr$ such that $Z \cap \cnro$ is a submanifold of $\cnro$, and $\bdry Z\subset \bdry\cnr$. Also assume that $C_n[\gamma(S^l)]$ and $Z$ are boundary-disjoint.  Suppose there is a standard embedding $i\co S^l\hookrightarrow \R^k$, such that $C_n[i]\pitchfork Z$ in $\cnr$. 

Then for all $\epsilon >0$, there is a  $C^\infty$-open neighborhood of $\gamma$, in which there is, for all $m$, a $C^m$-dense set of smooth embeddings $\gamma':S^l\hookrightarrow \R^k$, such that $\| C_n[\gamma'] - C_n[\gamma]\|_0< \epsilon$, and $C_n[\gamma'] \transverse Z$, and moreover, $C_n[i(S^l)] \cap Z$ and $C_n[\gamma'(S^l)] \cap Z$ represent the same homology class in $Z$. 
\end{theorem}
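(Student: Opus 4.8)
The plan is to assemble the four steps sketched above, treating the hypotheses on $Z$ purely formally; this is the argument of \cite{Slq} Theorem~21. First I would reserve room at the boundary. By hypothesis $C_n[\gamma(S^l)]$ and $Z$ are boundary-disjoint, that is, $\bdry C_n[\gamma(S^l)]\cap\bdry Z=\emptyset$; since both boundaries are compact subsets of $\cnr\subset A_n[\R^k]$, they lie a positive sup-distance $\delta_0$ apart. Shrinking the given $\epsilon$ to $\min\{\epsilon,\delta_0\}$ at the outset, any embedding $\gamma'$ with $\|C_n[\gamma']-C_n[\gamma]\|_0<\epsilon$ then automatically has $\bdry C_n[\gamma'(S^l)]$ disjoint from $\bdry Z$, so that every transversality question for such a $\gamma'$ can be decided in the interior $\cnro$, where $Z\cap\cnro$ is a genuine submanifold.

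Next I would achieve transversality by perturbing the sphere, not the configuration space. The delicate point --- and the step I expect to be the main obstacle --- is that a generic small isotopy of $\cnr$ would make $C_n[\gamma(S^l)]$ transverse to $Z$ but need not keep it of the form ``compactified configuration space of an embedded sphere.'' To get around this I would perturb $\gamma$ itself and reinterpret the condition $C_n[\gamma']\transverse Z$ --- stratum by stratum on $\bdry\cnr$, using the stratawise smoothness of the coordinate maps $\pi_{ij}$ and $r_{ijl}$ from \thm{configgeom} --- as a multijet transversality condition on $\gamma'$. Thom's multijet transversality theorem then shows that the embeddings $\gamma'$ with $C_n[\gamma']\transverse Z$ form a residual, hence for each finite $m$ a $C^m$-dense, subset of a $C^\infty$-open neighborhood of $\gamma$ in the Whitney topology; intersecting this with the open condition $\|C_n[\gamma']-C_n[\gamma]\|_0<\epsilon$ from the previous step yields the asserted dense family. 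This is precisely the content of \cite{Slq} Theorem~17, and it is the technical heart of the proof.

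Finally I would pin down the homology class. Fixing such a $\gamma'$, it and the standard embedding $i$ are two embeddings of $S^l$ in $\R^k$ with $C_n[i]$ and $C_n[\gamma']$ both transverse to $Z$ and --- by the boundary reservation above together with the hypotheses on the standard embedding --- both boundary-disjoint from $Z$. Then \thm{homology} applied with $\eta=\gamma'$ gives at once that $C_n[i(S^l)]\cap Z$ and $C_n[\gamma'(S^l)]\cap Z$ represent the same class in $Z$, which is the remaining claim. Unwinding that theorem: \thm{isotopy} produces a differentiable isotopy from $i$ to $\gamma'$, functoriality (\thm{functor}, \cor{smooth}) converts it into a homotopy $H$ from $C_n[i]$ to $C_n[\gamma']$, the Transversality Homotopy Extension Theorem replaces $H$ by a homotopic $H'$ with the same endpoints that is transverse to $Z$, and $H'$ then carries the compact cobordism $(H')^{-1}(Z)$ into $Z$ with boundary the disjoint union of the two intersection loci, so pushing forward their fundamental classes finishes the proof. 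In the present paper this last step even simplifies, since we only deform spheres isotopic to the identity within $\R^k$ and hence never need Haefliger's theorem at full strength.
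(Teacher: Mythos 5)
Your proposal follows essentially the same route as the paper, which itself only sketches this result by assembling Steps 1--4 of Section~\ref{sect:main-method} and citing \cite{Slq}: boundary-disjointness preserved under small perturbation, the multijet transversality theorem (\cite{Slq} Theorem~17) producing the $C^m$-dense family with $C_n[\gamma']\transverse Z$, and Theorem~\ref{thm:homology} (via Haefliger's theorem, functoriality, and the Transversality Homotopy Extension Theorem) for the equality of homology classes. The one small imprecision is your claim that both boundaries are compact --- $\bdry Z$ need not be (e.g.\ $\bdry\Sim(\Delta)\cong\Or(k)\times\{0\}\times\R^k$) --- but since $\bdry C_n[\gamma(S^l)]$ is compact and $\bdry Z$ is closed, the positive separation $\delta_0$ you need still exists.
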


Intuitively, this theorem shows that any smooth embedding $\gamma$ of $S^l$ in $\R^k$ has a neighborhood in which there is a dense set of smooth embeddings $\gamma'$ for which $C_n[\gamma'(S^l)]$ is guaranteed to have certain intersections with various submanifolds of $\cnr$ defined by geometric conditions.   Stated in a different way, we have the idea that a dense set of embeddings of $S^l$ always contain certain inscribed configurations of points. 

%%%%%%%%%%%%%%%%%%%%%%
%%%%%%%%%%%%%%%%%%%%%%%%%%

\section{Simplices}
\label{sect:simplices}

\subsection{Non-degenerate simplices}
\begin{definition} By a {\em simplex} $\Delta$ in $\R^k$, we mean a set of $k+1$ distinct points $\{\bfp_0, \dots, \bfp_k\}$ in $\R^k$ in general position. 
\end{definition}
By general position, we mean that no hyperplane in $\R^k$ contains more than $k$ points of $\Delta$.
As a first consequence of this definition, we note that the volume of the simplex $\Delta$ is nonzero. This means that for us, {\bf a simplex is  non-degenerate}. In addition, to each simplex $\Delta$, we can associate several sets: 
\begin{compactitem}
\item the nonzero distances $\{d_{ij}(\Delta)\} =\{\|\bfp_i-\bfp_j \|\}$, and observe that $d_{ij}=d_{ji}$;
\item the unit vectors $\{\pij (\Delta)\} = \{\frac{\bfp_i-\bfp_j}{\|\bfp_i-\bfp_j\|}\}$;
\item the ratios $\{\rijl (\Delta)\}= \{\frac{\|\bfp_i-\bfp_j\|}{\|\bfp_i-\bfp_l\|}\}= \{\frac{d_{ij}}{d_{il}}\}$. 
\end{compactitem}
Here, we assume that $i\neq j$ (and $j\neq l, i\neq l$), so the set of $k(k+1)/2$ distances consists of nonzero values, and the set of ratios has values in $(0,\infty)$.  

A natural question to ask, is given a set of nonzero distances $\mathcal{D}=\{d_{ij}\}$, is there a simplex that can be constructed with those distances?  The theory of distance geometry allows us to decide which sets of distances $\mathcal{D}$ are constructible. We start be defining the Cayley-Menger determinant for  $\{\bfp_0,\bfp_1,\dots ,\bfp_k\}$, or equivalently for $\mathcal{D}=\{d_{ij}\} =\{\|\bfp_i-\bfp_j \|\}$ (see for instance \cite{Blumenthal:1943vx,MR839626}):
\begin{equation*}
\CM(\{\bfp_0,\bfp_1,\dots ,\bfp_k\}) = \CM(\mathcal{D}) = 
\left|
\begin{matrix}
0 & 1 & 1 & \dots & 1 \\
1 & 0 & d_{01}^2 & \dots & d_{0k}^2 \\
1 & d_{10}^2 & 0 & \dots & d_{1k}^2 \\
\vdots & \vdots & \vdots &  & \vdots \\
1 & d_{k0}^2 & d_{k1}^2 & \dots & 0 \\
\end{matrix}
\right| .
\end{equation*}

\begin{theorem}[\cite{MR882541} Theorem 9.7.3.4, and 9.14.23]
Given a set of nonzero distances $\mathcal{D}=\{d_{ij}\}$  for $i,j = 0,1,\dots, k$,  a necessary and sufficient condition for the existence of a simplex $\{\bfp_0,\dots, \bfp_k\}$ with $d_{ij}=\| \bfp_i-\bfp_j\|$ is that for every $h=2,\dots,k$, and every $h$-element subset of $\{0,1,\dots, k\}$ the corresponding Cayley-Menger determinant be nonzero and its sign be $(-1)^{h}$. %Note corrected an error in Berger, see problem 9.14.23.

Moreover, when  $\mathcal{D}=\{d_{ij}\} = \{\|\bfp_i - \bfp_j\|\}$ for $\bfp_0,\dots,\bfp_{k}\in\R^k$, the volume $V$ of the simplex with vertices $\bfp_0, \dots, \bfp_{k}$ obeys 
\begin{equation*}
\Vol^2(\mathcal{D}) = \frac{(-1)^{k+1}}{2^k(k!)^2} \CM(\mathcal{D}).
\end{equation*}
\label{thm:CM}
\end{theorem}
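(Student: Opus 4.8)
This is a classical fact from distance geometry, and the plan is to reduce both assertions to linear algebra on the Gram matrix of the edge vectors out of one vertex. Translate the prospective vertex $\bfp_0$ to the origin and set $\bfv_i = \bfp_i - \bfp_0$ for $i = 1,\dots,k$. Polarization gives $\langle \bfv_i,\bfv_j\rangle = \tfrac12(\|\bfv_i\|^2 + \|\bfv_j\|^2 - \|\bfv_i-\bfv_j\|^2) = \tfrac12(d_{0i}^2 + d_{0j}^2 - d_{ij}^2)$, so the Gram matrix $G = (\langle\bfv_i,\bfv_j\rangle)_{1\le i,j\le k}$ is determined by $\mathcal{D}$ alone. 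The first step is to record the elementary determinant identity
\[
\CM(\mathcal{D}) = (-1)^{k+1}\,2^{k}\,\det G .
\]
It comes out of subtracting the $\bfp_0$-row of the $(k+2)\times(k+2)$ Cayley--Menger matrix from its other rows, and likewise for the $\bfp_0$-column: the border row becomes $(0,1,0,\dots,0)$, and expanding along it leaves a determinant that is block-triangular with diagonal blocks $1$ and $-2G$; tracking signs gives the constant, and the case $k=1$ pins it down. Since $\CM$ of a finite point set depends only on the mutual distances, the same manipulation, applied after translating any chosen vertex of a subset $S\subseteq\{0,\dots,k\}$ to the origin, shows that $\CM$ of $\{\bfp_i : i\in S\}$ equals $(-1)^{|S|}2^{|S|-1}\det G_S$, where $G_S$ is the associated $(|S|-1)\times(|S|-1)$ Gram matrix.

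Next I would use the two standard facts about Gram matrices: $G_S$ is always positive semidefinite, and it is positive definite precisely when the edge vectors spanning $S$ are linearly independent, i.e.\ when the points of $S$ are affinely independent. Necessity is then immediate: if $\{\bfp_0,\dots,\bfp_k\}$ is in general position, every sub-configuration is affinely independent, hence every $G_S$ is positive definite, $\det G_S > 0$, and $\CM$ of $\{\bfp_i : i\in S\}$ has sign $(-1)^{|S|}$; moreover, applying the identity to $S = \{0,\dots,k\}$ together with $\det G = (\det[\bfv_1|\cdots|\bfv_k])^2 = (k!)^2\Vol^2$ (the standard volume formula for a simplex) gives
\[
\Vol^2(\mathcal{D}) = \frac{(-1)^{k+1}}{2^{k}(k!)^2}\,\CM(\mathcal{D}),
\]
which is in particular positive, so the ``moreover'' clause needs no separate argument. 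For sufficiency I would apply the identity to the subsets containing $\bfp_0$: by it, the principal minor of $G$ on an index set $\{i_1,\dots,i_r\}\subseteq\{1,\dots,k\}$ is positive exactly when $\CM$ of $\{\bfp_0,\bfp_{i_1},\dots,\bfp_{i_r}\}$ has sign $(-1)^{r+1}$, so the hypothesis makes every principal minor of $G$ positive, whence $G$ is positive definite by Sylvester's criterion. A positive-definite symmetric matrix factors as $G = B^{\mathsf{T}}B$ with $B$ an invertible $k\times k$ real matrix; taking $\bfp_0 = \bfzero$ and $\bfp_1,\dots,\bfp_k$ to be the columns of $B$ realizes $\mathcal{D}$, and invertibility of $B$ forces the configuration to be affinely independent, hence a non-degenerate $k$-simplex. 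The sign conditions on subsets not containing $\bfp_0$ are then automatic, being instances of the necessity direction for the simplex just produced.

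I do not expect a real obstacle --- the content is entirely classical --- but there are two points that need care, both bookkeeping in nature: getting the sign and the power of $2$ in the determinant identity right uniformly across subsets of every size, and noting that the sufficiency argument uses the sign condition on subsets of all sizes up through the full vertex set (equivalently $\det G > 0$), which is the very datum that reappears in the volume formula. In this sense the two halves of the theorem are two readings of the single displayed identity.
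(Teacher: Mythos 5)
The paper does not actually prove this statement: it is quoted verbatim from Berger \cite{MR882541} (Theorems 9.7.3.4 and 9.14.23), so there is no in-paper argument to compare yours against. Your Gram-matrix proof is the standard one and is essentially correct. The identity $\CM(\mathcal{D}) = (-1)^{k+1}2^{k}\det G$ with $G_{ij} = \tfrac12(d_{0i}^2+d_{0j}^2-d_{ij}^2)$ is right (the case $k=1$ gives $\CM = 2d^2$, and $k=2$ reproduces the Heron computation displayed just after the theorem), the translation of the sign conditions into positivity of principal minors of $G$ is correct, Sylvester's criterion and the factorization $G=B^{\mathsf T}B$ give sufficiency, and $\det G = (\det B)^2 = (k!)^2\Vol^2$ yields the volume formula. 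Your closing observation that the sign conditions on subsets omitting $\bfp_0$ are redundant is also accurate.

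One point you only half-noticed should be stated outright: as transcribed in the paper the condition ranges over $h=2,\dots,k$, i.e.\ over \emph{proper} subsets of $\{0,\dots,k\}$ only, and that version of the statement is false. For $k=2$ the $h=2$ condition says each $\CM(\{d_{ij}\}) = 2d_{ij}^2$ is positive, which holds for any nonzero distances, yet not every triple of positive numbers is realizable as a non-degenerate triangle. Your sufficiency argument in fact uses the sign condition on the full $(k+1)$-element set (equivalently $\det G>0$), so what you have proved is the corrected statement with $h$ running up to $k+1$; that is the version Berger asserts and the version the rest of the paper relies on, so this is a typo in the quoted statement rather than a gap in your argument.
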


Recall that by definition, a simplex is non-degenerate.  This means that both the volume of the simplex and Cayley-Menger determinant are nonzero.

The Cayley-Menger determinant generalizes standard facts in triangle geometry: for instance, for a triangle with side lengths $a$, $b$, and $c$ we can write this determinant explicitly as 
\begin{equation*}
\CM(\{a,b,c\}) = a^4-2 a^2 b^2-2 a^2 c^2+b^4-2 b^2 c^2+c^4 = -(a+b+c)(a+b-c)(a-b+c)(-a+b+c).
\end{equation*}
and conclude that 
\begin{equation*}
\operatorname{Area}(\{a,b,c\})^2 = \frac{1}{16} (a+b+c)(a+b-c)(a-b+c)(-a+b+c).
\end{equation*}
This is Heron's formula for the area of the triangle. We can see the triangle inequality, (a criteria for constructability of a triangle), in these formulas: Theorem~\ref{thm:CM} holds if and only if one of the side lengths is greater than the sum of the other two. 

%%%%%%%%%%%%%%
\subsection{Simplices and configuration spaces}

We will now shift our viewpoint and,  with an abuse of notation, view the simplex $\Delta$ in $\R^k$ as an ordered $(k+1)$-tuple of points $\Delta=(\bfp_0, \dots, \bfp_k)$. Thus the simplex is  a point in the open configuration space $C_{k+1}(\R^k)$. We now wish to understand the set of points $\{(\bfq_0, \dots, \bfq_k)\}$ in $C_{k+1}(\R^k)$ that correspond to simplices similar to our given simplex $\Delta$. By similar, we mean there is a translation, rotation, and nonzero scaling of $\R^k$ which maps $(\bfq_0, \dots, \bfq_k)$ to $\Delta$.

\begin{definition} For a given non-degenerate simplex $\Delta=(\bfp_0,\dots, \bfp_k)$, we let $\Sim(\Delta)\subset C_{k+1}(\R^k)$ denote the space of simplices similar to $\Delta$. Then $\Sim(\Delta)$ is the set of all $\q\in C_{k+1}(\R^k)$ such that  
$$\rijl(\q)= r_{ijl}(\Delta) \quad \text{for all}\ \  i\neq j\neq l\neq i.$$
\end{definition}

That is, when $\q=(\bfq_0,\dots, \bfq_k)$ is extrinsically similar to $\Delta$, then we have
$$r_{ijl}(\q) = \frac{\|\bfq_i - \bfq_j\|}{\|\bfq_i - \bfq_l\|} = \frac{\|\bfp_i - \bfp_j\|}{\|\bfp_i - \bfp_l\|} = r_{ijl}(\Delta).$$ 

Our next aim is to show that $\Sim(\Delta)$ is a submanifold of $C_{k+1}(\R^k)$, by proving that  $\Sim(\Delta)$ is diffeomorphic to $\Or(k)\times (0,\infty)\times \R^k$. We also aim to understand how the boundary of $\Sim(\Delta)$ sits inside of $C_{k+1}[\R^k]$.  In order to do this, we will associate a matrix to $\Sim(\Delta)$, and look at the polar decomposition of that matrix. So we will need to use some results from linear algebra along the way. These are all found in Appendix~\ref{append:lin-alg}.

\begin{definition} Given a configuration $\q=(\bfq_0,\dots, \bfq_k)$ in $\Sim(\Delta)$, we define the $k\times k$ matrix $\Pi(\q)$ by 
$$\Pi(\q) = \begin{bmatrix} \pi_{10}(\q) & \dots & \pi_{k0}(\q) \end{bmatrix}.$$
\end{definition}

\begin{proposition}
Given a configuration $\q = \{\bfq_0, \ldots, \bfq_k\}$ in $\Sim(\Delta)$, the matrix $\Pi(\q)$ has rank $k$, and the $k\times k$ matrix $\Pi(\q)^T\Pi(\q)$ is given by
$$\Pi(\q)^T\Pi(\q) = \begin{bmatrix}\cos(\theta_{ij})\end{bmatrix} = \begin{bmatrix}\frac12 (r_{0ij}(\q) + r_{0ji}(\q) - r_{ij0}(\q) r_{ji0})(\q)\end{bmatrix} = P^2(\Delta),$$
where $P(\Delta)$ is a uniquely determined symmetric positive-definite matrix.
\label{prop:P-matrix}
\end{proposition}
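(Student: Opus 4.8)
The claim has three parts: (1) $\Pi(\q)$ has rank $k$; (2) the entries of $\Pi(\q)^T\Pi(\q)$ are as stated; (3) $\Pi(\q)^T\Pi(\q)$ has a unique symmetric positive-definite square root $P(\Delta)$. I would start with part (2), since the entry formula is really just the law of cosines in disguise and it immediately feeds the other two parts. Write $\bfv_i = \bfq_i - \bfq_0$, so that $\pi_{i0}(\q) = \bfv_i/\|\bfv_i\|$ and the $(i,j)$ entry of $\Pi(\q)^T\Pi(\q)$ is $\langle \bfv_i,\bfv_j\rangle/(\|\bfv_i\|\,\|\bfv_j\|) = \cos\theta_{ij}$, where $\theta_{ij}$ is the angle at vertex $\bfq_0$ in the triangle $\bfq_0\bfq_i\bfq_j$. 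Then polarize: $\|\bfv_i-\bfv_j\|^2 = \|\bfv_i\|^2 + \|\bfv_j\|^2 - 2\langle\bfv_i,\bfv_j\rangle$, i.e. $\|\bfq_i-\bfq_j\|^2 = \|\bfq_0-\bfq_i\|^2 + \|\bfq_0-\bfq_j\|^2 - 2\|\bfv_i\|\|\bfv_j\|\cos\theta_{ij}$. Divide through by $2\|\bfq_0-\bfq_i\|\,\|\bfq_0-\bfq_j\|$ and recognize the three resulting quotients as $r_{0ij}(\q)$, $r_{0ji}(\q)$, and $r_{ij0}(\q)\,r_{ji0}(\q)$ respectively, giving
$$\cos\theta_{ij} = \tfrac12\bigl(r_{0ij}(\q) + r_{0ji}(\q) - r_{ij0}(\q)\,r_{ji0}(\q)\bigr).$$
This is a finite identity valid on $C_{k+1}(\R^k)$ and extends continuously, so nothing delicate happens here. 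Since $\q\in\Sim(\Delta)$ forces $r_{ijl}(\q) = r_{ijl}(\Delta)$ for all triples, the right-hand side depends only on $\Delta$, so $\Pi(\q)^T\Pi(\q)$ is a fixed matrix determined by $\Delta$ — this is what lets us write it as $P^2(\Delta)$.

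For part (1), rank: the vectors $\bfv_1,\dots,\bfv_k$ are the edge vectors of the simplex emanating from $\bfq_0$, and non-degeneracy of the simplex (its volume is nonzero, equivalently general position) says exactly that $\bfv_1,\dots,\bfv_k$ are linearly independent, hence $\Pi(\q)$, whose columns are the $\bfv_i$ rescaled to unit length, has rank $k$. I would note that the nondegeneracy condition is stable under similarity (it is scale/translation/rotation-invariant and is recorded in the $r_{ijl}$ data via the Cayley–Menger criterion of Theorem~\ref{thm:CM}), so it holds for every $\q\in\Sim(\Delta)$, not just $\Delta$ itself.

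For part (3), positive-definiteness and uniqueness of the square root: once $\Pi(\q)$ has rank $k$ (full column rank), $\Pi(\q)^T\Pi(\q)$ is symmetric positive-definite by the standard Gram-matrix argument ($x^T\Pi^T\Pi x = \|\Pi x\|^2 > 0$ for $x\neq 0$). A symmetric positive-definite matrix has a unique symmetric positive-definite square root — this is a standard fact from linear algebra, which the paper says it has collected in Appendix~\ref{append:lin-alg}, so I would simply cite it there. Define $P(\Delta)$ to be that square root.

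**Main obstacle.** There is no serious obstacle; the only thing requiring care is bookkeeping with the indices of the ratio maps $r_{ijl}$ — making sure that after dividing the law-of-cosines identity by $2\|\bfq_0-\bfq_i\|\,\|\bfq_0-\bfq_j\|$ each term is matched to the correct $r$ with its indices in the right slots (recall $r_{ijl} = \|\bfq_i-\bfq_j\|/\|\bfq_i-\bfq_l\|$, so e.g. $\|\bfq_0-\bfq_i\|^2/(\|\bfq_0-\bfq_i\|\|\bfq_0-\bfq_j\|) = \|\bfq_0-\bfq_i\|/\|\bfq_0-\bfq_j\| = r_{0ij}(\q)$). I would verify this matching carefully once and present it as a short computation, and flag that the diagonal entries ($i=j$) give $\cos\theta_{ii}=1$ as they must. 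The deduction that the matrix is constant on $\Sim(\Delta)$ is then immediate from the defining equations of $\Sim(\Delta)$.
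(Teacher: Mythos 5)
Your proposal is correct and follows essentially the same route as the paper: the law-of-cosines computation of the Gram matrix entries $\cos\theta_{ij}=\tfrac12(r_{0ij}+r_{0ji}-r_{ij0}r_{ji0})$, constancy of the $r_{ijl}$ on $\Sim(\Delta)$, nondegeneracy of the simplex for the rank statement, and the appendix results for the unique symmetric positive-definite square root. The only cosmetic difference is that the paper establishes $\rank\Pi(\q)=k$ by first showing $\det\Pi(\Delta)\neq 0$ via the volume formula and then transferring the rank to $\Pi(\q)$ through the shared Gram matrix (Theorem~\ref{HJ-727}), whereas you argue directly that every $\q\in\Sim(\Delta)$ is itself a non-degenerate simplex; both are sound.
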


The matrix $\Pi(\q)^T\Pi(\q)$ is called the Gram matrix and it consists of the dot products of the $\pi_{ij}(\q)$, which are the cosines of the angles between the unit vectors $\pi_{ij}(\q)$. 

\begin{proof} Recall the law of cosines: $c^2 = a^2 + b^2 - 2ab\cos C$, where $c$ is the length of the side of a triangle $\triangle ABC$ opposite angle $C$ and $a$ and $b$ are the lengths of the sides subtending angle $C$. In the case $a = \| \bfq_i - \bfq_0\|$, $b = \| \bfq_j - \bfq_0\|$, and $c = \| \bfq_i - \bfq_j\|$, angle $C$ is $\theta_{ij}$ and we have
\begin{align*}
\cos \theta_{ij} &= \dfrac{a^2 + b^2 - c^2}{2ab} = \dfrac12 \left( \dfrac{a}{b} + \dfrac{b}{a} - \dfrac{c^2}{ab} \right) \\
&= \dfrac12 (r_{0ij}(\q) + r_{0ji}(\q) - r_{ij0}(\q) r_{ji0}(\q)).
\end{align*}
Because the $r_{ijl}$ are the same for all configurations in $\Sim(\Delta)$, this matrix is the same as for $\Pi(\Delta)$ associated with $\Delta=(\bfp_0, \dots, \bfp_k)\in C_{k+1}(\R^k)$. The following shows $\Pi(\Delta)$ has rank $k$:
\begin{align*}
0 < \mbox{\rm Vol}_k(\Delta) &= \dfrac1{k!} \det\begin{bmatrix} \bfp_1 - \bfp_0 & \dots & \bfp_k - \bfp_0\end{bmatrix}\\
&= \dfrac{\| \bfp_1 - \bfp_0\|\times \cdots \times \| \bfp_k - \bfp_0\|}{k!} \det \begin{bmatrix}\pi_{10}(\p) &\dots & \pi_{k0}(\p)\end{bmatrix} \\
&=  \dfrac{\| \bfp_1 - \bfp_0\|\times \cdots \times \| \bfp_k - \bfp_0\|}{k!} \det\Pi(\Delta).
\end{align*} 

By construction, $\Pi(\Delta)^T\Pi(\Delta)$ is a symmetric matrix. We have just shown that $\Pi(\Delta)$ has full column rank. Thus by Theorem~\ref{HJ-727}, we know $\Pi(\Delta)^T\Pi(\Delta)$ is positive-definite.  Since $\Pi(\Delta)^T\Pi(\Delta)=\Pi(\q)^T\Pi(\q)$, we again use Theorem~\ref{HJ-727} to deduce $\Pi(\q)$ also has rank $k$.  Now, using other standard results from linear algebra (Theorem~\ref{HJ-726} and Remark~\ref{rmk-726}), we can deduce there is a unique symmetric positive-definite matrix $P(\Delta)$ such that $P(\Delta) = (\Pi(\Delta)^T \Pi(\Delta))^{1/2}$.  
\end{proof}

We now use the polar decomposition theorem for a matrix to get a better understanding of $\Sim(\Delta)$. Recall, from Theorem~\ref{thm:polar}, that the polar decomposition of an $k\times k$ real matrix $A$ is a factorization of the form $A=UP$, where $U$ is orthogonal and $P$ is positive semidefinite symmetric matrix. This decomposition is unique when $A$ is nonsingular. (Intuitively, if $A$ is interpreted as a linear transformation of $\R^k$, then the polar decomposition separates it into a rotation or reflection $U$ of $\R^k$, and a scaling of the space along a set of $k$ orthogonal axes.)

\begin{proposition}
If $\q \in \Sim(\Delta)$, then $\Pi(\q) = U(\q) P(\Delta)$ where $U(\q)$ is a uniquely determined $k\times k$ orthogonal matrix which is smooth function of $\q$. The $k\times k$ matrix $P(\Delta)$ depends only on $\Delta$, and is a symmetric positive-definite matrix. 
\label{prop:Sim-polar}
\end{proposition}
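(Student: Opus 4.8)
The plan is to combine the previous proposition with the polar decomposition theorem (Theorem~\ref{thm:polar}), and then separately verify smoothness of the orthogonal factor. By Proposition~\ref{prop:P-matrix}, for any $\q\in\Sim(\Delta)$ the matrix $\Pi(\q)$ has rank $k$, hence is nonsingular, and $\Pi(\q)^T\Pi(\q)=P^2(\Delta)$ where $P(\Delta)$ is the unique symmetric positive-definite square root, depending only on the ratios $r_{ijl}(\Delta)$ and thus only on $\Delta$. Applying Theorem~\ref{thm:polar} to the nonsingular matrix $A=\Pi(\q)$ gives a factorization $\Pi(\q)=U(\q)\,\tilde P(\q)$ with $U(\q)$ orthogonal and $\tilde P(\q)$ symmetric positive-definite, and this decomposition is unique because $\Pi(\q)$ is nonsingular. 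The point is that $\tilde P(\q)^2=(U(\q)\tilde P(\q))^T(U(\q)\tilde P(\q))=\Pi(\q)^T\Pi(\q)=P^2(\Delta)$, so by uniqueness of the positive-definite square root (Theorem~\ref{HJ-726} and Remark~\ref{rmk-726}) we get $\tilde P(\q)=P(\Delta)$. Hence $\Pi(\q)=U(\q)P(\Delta)$ with $U(\q)=\Pi(\q)P(\Delta)^{-1}$ uniquely determined, orthogonal, and independent of everything but $\q$ through the single factor $\Pi(\q)$.

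It then remains to check that $U(\q)$ is a smooth function of $\q$. I would argue this as follows: the entries $\pi_{i0}(\q)$ are smooth functions of $\q$ on $C_{k+1}(\R^k)$ (indeed on each face of the compactification, by Theorem~\ref{thm:configgeom}(3)), so $\q\mapsto\Pi(\q)$ is smooth; $P(\Delta)^{-1}$ is a fixed invertible matrix; therefore $U(\q)=\Pi(\q)P(\Delta)^{-1}$ is smooth as a composition of smooth maps. No appeal to smoothness of the polar decomposition map is actually needed here, precisely because the positive-definite factor is constant on $\Sim(\Delta)$ — that is the payoff of Proposition~\ref{prop:P-matrix}. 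One should also record that $U(\q)$ genuinely lands in $\Or(k)$: $U(\q)^TU(\q)=P(\Delta)^{-1}\Pi(\q)^T\Pi(\q)P(\Delta)^{-1}=P(\Delta)^{-1}P(\Delta)^2P(\Delta)^{-1}=I$.

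I do not expect a serious obstacle in this proof; it is essentially bookkeeping assembled from Proposition~\ref{prop:P-matrix}, the polar decomposition theorem, and uniqueness of positive-definite square roots. The only mild subtlety worth stating carefully is \emph{why} the positive-definite factor coming out of the polar decomposition of $\Pi(\q)$ equals $P(\Delta)$ rather than some $\q$-dependent matrix: this is forced by the identity $\Pi(\q)^T\Pi(\q)=P^2(\Delta)$ from the previous proposition together with the uniqueness clause for positive-definite square roots. Once that identification is in hand, uniqueness of $U(\q)$, its orthogonality, and its smoothness all follow immediately from $U(\q)=\Pi(\q)P(\Delta)^{-1}$.
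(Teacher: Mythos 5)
Your argument for the factorization itself is the same as the paper's: apply the polar decomposition theorem to the nonsingular matrix $\Pi(\q)$ and use the identity $\Pi(\q)^T\Pi(\q)=P^2(\Delta)$ from Proposition~\ref{prop:P-matrix}, together with uniqueness of the positive-definite square root, to see that the positive factor is the fixed matrix $P(\Delta)$. Where you genuinely diverge is the smoothness of $U(\q)$: the paper invokes the general result of Dieci--Eirola (Remark~\ref{rmk:P-smooth}) that the orthogonal factor in a polar decomposition depends smoothly on a full-rank matrix, whereas you observe that since the positive factor is \emph{constant} on $\Sim(\Delta)$, one can simply write $U(\q)=\Pi(\q)P(\Delta)^{-1}$ and read off smoothness (and orthogonality) directly. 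Your route is more elementary and self-contained for this particular proposition --- it removes the need for the external smoothness citation entirely --- at the cost of being special to the situation where the positive factor does not vary; the paper's citation is the more general tool and is what one would need if the argument were ever run over a family of reference simplices $\Delta$. Both are correct; your observation that $U(\q)=\Pi(\q)P(\Delta)^{-1}$ is the cleaner justification here and is worth recording even if one keeps the citation.
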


\begin{proof}
In our case, since $\rank\Pi(\q)=k$, the polar decomposition theorem (Theorem~\ref{thm:polar}) tells us that for the $k\times k$ matrix $\Pi(\q)$ there is a unique decomposition into $k\times k$ matrices: $\Pi(\q) = U(\q)P$. Here, matrix $U(\q)$ is orthogonal, and $P=(\Pi(\q)^T \Pi(\q))^{1/2}$. From Proposition~\ref{prop:P-matrix}, we know 
$$P=(\Pi(\q)^T \Pi(\q))^{1/2} = (\Pi(\Delta)^T \Pi(\Delta))^{1/2} = P(\Delta).$$
Thus, for each $\q\in \Sim(\Delta)$, we have the same symmetric positive-definite $P(\Delta)$ matrix. The dependence of $\Pi(\q)$ on $\q$ is clearly smooth; that $U(\q)$ depends smoothly on $\Pi(\q)$ and hence on $\q$ and $U(\q)$ is smooth were shown in \cite{Dieci-Eirola} (see Remark~\ref{rmk:P-smooth}).
\end{proof}

Let $\Or(k)$ be the set of all orthogonal $k\times k$ matrices. Roughly speaking, Proposition~\ref{prop:Sim-polar} says that for a (non-degenerate) simplex $\Delta$, we can obtain a different configuration in $\Sim(\Delta)$ by multiplying $P(\Delta)$ on the left by a matrix in $\Or(k)$.
This  leads us to define the following map.

\begin{definition} The {\em pose} map $ps: \Sim(\Delta) \rightarrow \Or(k)$ is defined by $ps(\q)=U(\q)$, where $U(\q)$ is the unique orthogonal matrix $U(\q)$ such that $\Pi(\q) = U(\q)P(\Delta)$ (as found in Proposition~\ref{prop:Sim-polar}).
For a simplex $\q\in \Sim(\Delta)$, we call the corresponding element $U(\q) \in \Or(k)$ the {\em pose} of $\q$.
\label{def:pose}
\end{definition} 

We can now deduce the structure of $\Sim(\Delta)$ as a submanifold of $C_{k+1}[\R^k]$.

\begin{theorem}
If $\Delta$ is a non-degenerate $k$-simplex in $\R^k$, then $\Sim(\Delta)\subset C_{k+1}(\R^k)$ is a submanifold diffeomorphic to $\Or(k) \times (0, \infty) \times \R^k$. 
\label{thm:SimD}
\end{theorem}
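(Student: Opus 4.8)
The plan is to exhibit an explicit diffeomorphism $\Or(k) \times (0,\infty) \times \R^k \to \Sim(\Delta)$ and check it is smooth with smooth inverse. The natural candidate is the map sending $(U, t, \bfv)$ to the configuration whose $i$-th vertex is $\bfv + t\, U\, (\bfp_i - \bfp_0)$ for $i = 0, \dots, k$ (so the $0$-th vertex is $\bfv$). This is manifestly a composition of translation, rotation, and positive scaling applied to $\Delta$, hence lands in $\Sim(\Delta)$; conversely every similar simplex arises this way. The content is that (a) this map is a bijection onto $\Sim(\Delta)$, with the orthogonal factor being exactly the pose, and (b) it is smooth with smooth inverse.

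First I would record that the given map $\Phi(U,t,\bfv) := (\bfv,\, \bfv + tU(\bfp_1-\bfp_0),\, \dots,\, \bfv + tU(\bfp_k-\bfp_0))$ takes values in $C_{k+1}(\R^k)$: the points are distinct because $U$ is invertible, $t \neq 0$, and the $\bfp_i$ are distinct, and they are in general position because $\Pi(\Delta)$ has rank $k$ by Proposition~\ref{prop:P-matrix}, so the image points span $\R^k$ affinely. The distance ratios $r_{ijl}$ are invariant under translation, rotation, and scaling, so $\Phi(U,t,\bfv) \in \Sim(\Delta)$. Smoothness of $\Phi$ is clear since each coordinate is a polynomial in the entries of $U$, $t$, and $\bfv$.

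Next I would construct the inverse. Given $\q = (\bfq_0,\dots,\bfq_k) \in \Sim(\Delta)$, set $\bfv = \bfq_0$, recover the scale as $t = \|\bfq_1 - \bfq_0\| / \|\bfp_1 - \bfp_0\|$, and set $U = U(\q)$, the pose from Definition~\ref{def:pose}. The key identity to verify is that $\bfq_i - \bfq_0 = t\, U(\q)(\bfp_i - \bfp_0)$ for all $i$: unwinding the columns of $\Pi(\q) = U(\q) P(\Delta)$ and $\Pi(\Delta) = I \cdot P(\Delta)$ (taking $U = I$ for $\Delta$ itself in the polar decomposition), we get $\pi_{i0}(\q) = U(\q)\pi_{i0}(\Delta)$, i.e. $(\bfq_i - \bfq_0)/\|\bfq_i-\bfq_0\| = U(\q)(\bfp_i-\bfp_0)/\|\bfp_i-\bfp_0\|$; one then checks the ratio $\|\bfq_i - \bfq_0\|/\|\bfp_i - \bfp_0\|$ is independent of $i$ because $r_{0i1}(\q) = r_{0i1}(\Delta)$, so it equals $t$. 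This shows $\Phi$ is a bijection with inverse $\q \mapsto (U(\q),\, t(\q),\, \bfq_0)$. Smoothness of this inverse follows from smoothness of $\q \mapsto U(\q)$ (Proposition~\ref{prop:Sim-polar}) together with the evident smoothness of $\q \mapsto \bfq_0$ and $\q \mapsto \|\bfq_1 - \bfq_0\|$ (which is nonzero on $C_{k+1}(\R^k)$). Finally, since $\Sim(\Delta)$ is cut out of $C_{k+1}(\R^k)$ by the equations $r_{ijl}(\q) = r_{ijl}(\Delta)$ and is the diffeomorphic image of a manifold under an embedding, it is a submanifold of the appropriate dimension $\binom{k}{2} + 1 + k = \dim \Or(k) + 1 + k$.

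The main obstacle is really just the bookkeeping in the middle step: making sure the pose $U(\q)$ produced by the abstract polar decomposition genuinely carries the \emph{entire} vertex set $\bfp_i - \bfp_0$ (not merely the single column directions $\pi_{i0}$) to $\bfq_i - \bfq_0$ up to the common scalar $t$, and that this scalar is forced to be consistent across all $i$ by the $r_{ijl}$ constraints. Once that identity is in hand everything else is formal — smoothness in one direction is polynomial, smoothness in the other direction is quoted from Proposition~\ref{prop:Sim-polar}, and the submanifold claim is automatic from the embedding. One should also note $(0,\infty)$ rather than $[0,\infty)$ appears here precisely because a nonzero scaling is required for non-degeneracy; the $[0,\infty)$ version enters only when passing to the compactification $C_{k+1}[\R^k]$ in Theorem~\ref{thm:SimD-bdry}.
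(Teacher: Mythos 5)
Your proposal is correct and follows essentially the same route as the paper's proof: an explicit parametrization of $\Sim(\Delta)$ by (pose, scale, basepoint), with the inverse built from the polar-decomposition pose map of Proposition~\ref{prop:Sim-polar} and the same WLOG normalization $U(\Delta)=I$ so that $\Pi(\Delta)=P(\Delta)$. The only difference is cosmetic — you use the scale $t=\|\bfq_1-\bfq_0\|/\|\bfp_1-\bfp_0\|$ where the paper uses $\lambda=\|\bfq_1-\bfq_0\|$, a trivial reparametrization of the $(0,\infty)$ factor.
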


\begin{proof}
We will define the following pair of maps:
$$\Or(k) \times (0,\infty) \times \R^k \mapright{i_\Delta}{} \Sim(\Delta)  \subset C_{k+1}(\R^k) \mapright{ps_\Delta}{} \Or(k) \times (0,\infty) \times \R^k.$$
We will then prove that both maps are smooth, that the first map $i_\Delta$ is onto $\Sim(\Delta)$, and the composition is the identity. This will prove that $i_\Delta$ is a diffeomorphism onto $\Sim(\Delta)$. 

We have been given $\Delta=(\bfp_0,\ldots, \bfp_k)\in C_{k+1}(\R^k)$. We can then define the set of ratios $\{ r_{ijl}(\Delta)\}$, the set of unit vectors $\{\pij(\Delta)\}$, and the matrix $\Pi(\Delta)$.  We know that $\Pi(\Delta) = U(\Delta)P(\Delta)$, from Proposition~\ref{prop:Sim-polar}. Moreover, without loss of generality, we can assume that $\Delta$ is such that $U(\Delta)=I_k$ (the identity matrix), and so $\Pi(\Delta) = P(\Delta)$.

Let $A=\begin{bmatrix} \bfv_1 \ \dots \ \bfv_k\end{bmatrix}\in \Or(k)$, and define a new set of  unit vectors by $\pi_{ij} =\begin{bmatrix} \bfv_1 \ \dots \ \bfv_k\end{bmatrix} \pi_{ij}(\Delta)$. (Intuitively, the new $\pij$ are $\pij(\Delta)$ rotated/reflected by $A$.) If we also let $r_{ijl} = r_{ijl}(\Delta)$, then we define the map $i_\Delta$ by 
\begin{align*} i_\Delta (A, \lambda, \bfq_0) & = (\bfq_0, \bfq_0 + \lambda \pi_{10}, \bfq_0 + \lambda r_{021} \pi_{20}, \ldots, \bfq_0 + \lambda r_{0k1} \pi_{k0}) = \q.
 \end{align*}
This map $i_\Delta$ is clearly smooth in $A$, $\lambda$, and $\bfq_0$. Furthermore, $i_\Delta(A, \lambda, \bfq_0)$ has $r_{ijl}(\q)=r_{ijl}(\Delta)$ by construction, and so  lies in $\Sim(\Delta)$. 

We prove that $i_\Delta$ is onto $\Sim(\Delta)$. Given $\q\in \Sim(\Delta)$, we use Proposition~\ref{prop:Sim-polar} and our assumption that $U(\Delta)=I_k$, to write
$$\Pi(\q) = \begin{bmatrix} \pi_{10}(\q)\ \dots \ \pi_{k0}(\q)\end{bmatrix} = U(\q) P(\Delta)=U(\q)\Pi(\Delta).$$
This means that $\pi_{j0}(\q) = U(\q)\pi_{j0}(\Delta)$ for $j=1,\dots, k$. Remembering that $\pi_{j0}=-\pi_{0j}$, we then deduce the other values of $\pi_{j}(\q)$ satisfy the same relationship:
\begin{align*}
\pi_{ij}(\q) &= \dfrac{\bfq_j - \bfq_i}{\|\bfq_j - \bfq_i\|} = \dfrac{\bfq_j - \bfq_0}{\|\bfq_j - \bfq_i\|} + \dfrac{\bfq_0 - \bfq_i}{\|\bfq_j - \bfq_i\|} \\
&= \dfrac{\| \bfq_j - \bfq_0\|}{\| \bfq_j - \bfq_i\|} \dfrac{\bfq_j - \bfq_0}{\|\bfq_j - \bfq_0\|} +  \dfrac{\|\bfq_0 - \bfq_i\|} {\| \bfq_j - \bfq_i\|} \dfrac{\bfq_0 - \bfq_i}{\|\bfq_0 - \bfq_i\|}  \\
& = r_{j0i}(\q) \pi_{j0}(\q) + r_{i0j}(\q) \pi_{0i}(\q) \\
& = r_{j0i}(\q) U(\q)\pi_{j0}(\Delta) + r_{i0j}(\q) U(\q)\pi_{0i}(\Delta) = U(\q)\pi_{ij}(\Delta). 
\end{align*}
Thus for $\q\in \Sim(\Delta)$, we have $\pi_{ij}(\q) = U(\q)\pi_{ij}(\Delta)$. For the map $i_\Delta$, we let $\lambda=\|\bfq_1-\bfq_0\|$, then $i_\Delta( U(\q), \|\bfq_1-\bfq_0\|, \bfq_0) = (\bfq_0,\bfq_1,\dots, \bfq_k)=\q$. To see this last equation, note that $$\bfq_1=\bfq_0+\lambda\frac{\bfq_1-\bfq_0}{\|\bfq_1-\bfq_0\|} \quad \text{and}\quad \bfq_2= \bfq_0 + \lambda r_{021}(\q) \pi_{20}(\q) = \bfq_0+\lambda\dfrac{\| \bfq_2-\bfq_0 \|}{\| \bfq_1-\bfq_0 \|}\dfrac{\bfq_2-\bfq_0}{\| \bfq_2-\bfq_0 \|}.$$

We can now define the map $ps_\Delta\co \Sim(\Delta)\rightarrow \Or(k)\times(0,\infty)\times \R^k$ using the pose map from Definition~\ref{def:pose}. We define  $ps_\Delta(\q) := (ps(\q),\|\bfq_1-\bfq_0\|, \bfq_0) = (U(\q),\|\bfq_1-\bfq_0\|, \bfq_0)  $. From Proposition~\ref{prop:Sim-polar}, we know $U(\q)$ depends smoothly on $\q$,  and hence $ps_\Delta$ depends smoothly on $\q$.  A moment's thought shows that, by construction, we have $ps_\Delta\circ i_\Delta =\id$. 

This means that $i_\Delta$ (and hence $ps_\Delta$) is a diffeomorphism, and $\Sim(\Delta)$ is a submanifold of $C_{k+1}(\R^k)$.
\end{proof}

We note that this theorem shows that the polar decomposition is a diffeomorphism for matrices of full rank. This is analogous to the smoothness result for the polar decomposition of Dieci and Eirola~\cite{Dieci-Eirola}. This theorem is not true for rank-deficient matrices, which explains why we have restricted our attention to spheres $S^{k-1}$ isotopic to each other in $\R^k$; Haefliger's theorem might guarantee the existence of an isotopy of the spheres in a higher-dimensional $\R^K$, but $\Sim(\Delta)$ would still consist of rank $k < K$ matrices and hence be harder to control.

The next theorem shows $\Sim(\Delta)$ has a well understood structure in the boundary $\bdry C_{k+1}[\R^k]$.

\begin{theorem}
If $\Delta$ is a non-degenerate $k$-simplex in $\R^k$, then the boundary of $\Sim(\Delta)$ corresponds to configurations in the interior of the $(0, \ldots, k)$ face of $\partial C_{k+1}[\R^k]$, and is diffeomorphic to $\Or(k) \times \{0\} \times \R^k$.
\label{thm:SimD-bdry}
\end{theorem}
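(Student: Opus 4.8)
The plan is to extend the diffeomorphism $i_\Delta\co \Or(k)\times(0,\infty)\times\R^k \to \Sim(\Delta)$ from Theorem~\ref{thm:SimD} to the closed half-line $[0,\infty)$ and identify what happens at $\lambda = 0$. Recall that $i_\Delta(A,\lambda,\bfq_0) = (\bfq_0,\, \bfq_0+\lambda\pi_{10},\, \bfq_0+\lambda r_{021}\pi_{20},\,\ldots,\,\bfq_0+\lambda r_{0k1}\pi_{k0})$, where the unit vectors $\pi_{j0}=A\pi_{j0}(\Delta)$ and the ratios $r_{ijl}$ are fixed. A point $\q\in\Sim(\Delta)$ lies in the boundary $\bdry C_{k+1}[\R^k]$ exactly when it is not in $C_{k+1}(\R^k)$, i.e.\ when some entries collide; since the ratios $r_{ijl}$ are finite and nonzero, all pairwise distances $\|\bfq_i-\bfq_j\|$ are comparable up to fixed constants, so either all entries are distinct (interior case, $\lambda>0$) or all entries coincide ($\lambda\to 0$). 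This is why the boundary should sit in the $(0,1,\ldots,k)$ face, the stratum where all $k+1$ points come together simultaneously.

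The key steps, in order. First, I would recall from Section~\ref{section:config} (Theorem~\ref{thm:configgeom}) that a point of $C_{k+1}[\R^k]$ carries, in addition to the positions, the values $\pi_{ij}$ and $r_{ijl}$, and that these extend continuously to the boundary. Using the coordinates $(\pi_{ij})\times(r_{ijl})$ of $A_{k+1}[\R^k]$, I would show that $i_\Delta$ extends continuously to $\bar i_\Delta\co \Or(k)\times[0,\infty)\times\R^k \to C_{k+1}[\R^k]$: the position coordinate sends $(A,\lambda,\bfq_0)$ to the $(k+1)$-tuple above (which at $\lambda=0$ is the constant tuple $(\bfq_0,\ldots,\bfq_0)$), the direction coordinates $\pi_{ij}$ are the fixed unit vectors $A\pi_{ij}(\Delta)$ (independent of $\lambda$, hence already well-defined at $\lambda=0$), and the ratio coordinates $r_{ijl}$ are the fixed constants $r_{ijl}(\Delta)$. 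Second, I would verify that the image at $\lambda=0$ is a single well-defined point of the $(0,\ldots,k)$ face for each $(A,\bfq_0)$: the point where all $\bfq_i$ equal $\bfq_0$, approached along the directions $A\pi_{ij}(\Delta)$ at relative rates $r_{ijl}(\Delta)$. By the description of the stratification in \cite{newkey118, newkey119}, such "screen" data (directions and relative rates of a simultaneous collision) is precisely what parametrizes the interior of the $(0,\ldots,k)$ face, so this point lies in the \emph{interior} of that face. Third, I would show $\bar i_\Delta$ restricted to $\Or(k)\times\{0\}\times\R^k$ is a diffeomorphism onto $\bdry\Sim(\Delta)$: injectivity follows since $A$ is recovered from the $\pi_{j0}$ coordinates via $\Pi(\q)=A\,\Pi(\Delta)$ (as $P(\Delta)=\Pi(\Delta)$ is invertible) and $\bfq_0$ from the position coordinate; smoothness and smoothness of the inverse follow from the smooth-structure description of the $(0,\ldots,k)$ face, on which $\pi_{ij}$ and $r_{ijl}$ are smooth by Theorem~\ref{thm:configgeom}(3). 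Finally, I would check that $\bar i_\Delta$ is a diffeomorphism of manifolds-with-boundary onto $\overline{\Sim(\Delta)}$, so that $\bdry\Sim(\Delta)= \overline{\Sim(\Delta)}\setminus\Sim(\Delta)$ is exactly the $\lambda=0$ slice, giving $\bdry\Sim(\Delta)\cong\Or(k)\times\{0\}\times\R^k$.

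The main obstacle I expect is Step 2–3: carefully matching the ad hoc "directions-and-rates" description of the $\lambda\to 0$ limit with the intrinsic combinatorial/smooth structure of the $(0,1,\ldots,k)$ stratum of $\bdry C_{k+1}[\R^k]$ as defined by Sinha, and in particular confirming that the limit lands in the \emph{interior} of that top face (no further degeneration occurs, because the fixed ratios $r_{ijl}(\Delta)$ are all finite and positive and the $\pi_{ij}(\Delta)$ span $\R^k$). Once the correct local coordinates on that face are in hand, the fact that $\bar i_\Delta$ is a diffeomorphism there is essentially the same computation as in Theorem~\ref{thm:SimD}, just with $\lambda=0$ adjoined; the genuinely new content is the identification of the boundary stratum, which is why I would lean on the stratification results of \cite{newkey118, newkey119} rather than reprove them.
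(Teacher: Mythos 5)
Your proposal is correct and follows essentially the same route as the paper: extend $i_\Delta$ to $\lambda=0$, observe that all points then coincide so the limit lies in the $(0,1,\ldots,k)$ face, and use the fact that the ratios $r_{ijl}(\Delta)$ are never $0$ nor $\infty$ to conclude the limit is in the interior of that face, with the $\lambda=0$ slice giving the diffeomorphism to $\Or(k)\times\{0\}\times\R^k$. The paper's proof is just a terser version of yours (it leaves the continuity of the extension and the diffeomorphism claim largely implicit, deferring to Corollary~\ref{cor:SimD}).
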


\begin{proof} By assumption the given simplex $\Delta=(\bfp_0, \dots, \bfp_k)$ is non-degenerate, so none of the vertices of $\Delta$ coincide, and the ratios $\rijl(\Delta)$ are never 0 nor $\infty$.  If we take $(A,\lambda,\bfp)\in \Or(k)\times [0,\infty)\times\R^k$ and consider the points in $i_\Delta(A,\lambda, \bfp_0)$, we see these points coincide if and only if $\lambda=0$, in which case they all coincide. This means $i_\Delta(A,0, \bfp_0)\subset\bdry C_{k+}[\R^k]$ and is in the $(0,1,\dots, k)$ face. Since the boundary of the  $(0,1,\dots, k)$ face consists of configurations with $\rijl(\Delta) = 0$, or $\rijl(\Delta) = \infty$, it follows immediately that $i_\Delta(A,0, \bfp_0)$ is in the interior of this face.
\end{proof}

\begin{corollary} 
If $\Delta$ is a non-degenerate $k$-simplex in $\R^k$, then $\Sim(\Delta)$ is a submanifold of $C_{k+1}[\R^k]$ that is diffeomorphic to $\Or(k) \times [0,\infty) \times \R^k$.
\label{cor:SimD}
\end{corollary}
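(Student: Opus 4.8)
The plan is to glue together the two structure results already in hand: the statement is nothing more than the concatenation of \thm{SimD} and \thm{SimD-bdry}, assembled into a single diffeomorphism of manifolds-with-boundary. Throughout I would write $\Sim(\Delta)$ for the closure in $C_{k+1}[\R^k]$ of the space of simplices similar to $\Delta$, so that $\bdry\Sim(\Delta)=\Sim(\Delta)\setminus C_{k+1}(\R^k)$. By \thm{SimD} the map $i_\Delta$ restricts to a diffeomorphism $i_\Delta\co\Or(k)\times(0,\infty)\times\R^k\to\Sim(\Delta)\cap C_{k+1}(\R^k)$ with smooth inverse $ps_\Delta$, and the defining formula for $i_\Delta$ is meaningful verbatim at $\lambda=0$; this produces an extension $\bar i_\Delta\co\Or(k)\times[0,\infty)\times\R^k\to C_{k+1}[\R^k]$ whose restriction to the slice $\lambda=0$ is, by \thm{SimD-bdry}, a homeomorphism onto $\bdry\Sim(\Delta)$, landing in the interior of the $(0,1,\dots,k)$ face of $\bdry C_{k+1}[\R^k]$. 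It then suffices to check that $\bar i_\Delta$ is a bijection onto $\Sim(\Delta)$ that is smooth, with smooth inverse, as a map of manifolds-with-boundary.

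First I would confirm continuity of $\bar i_\Delta$ into the ambient space $A_{k+1}[\R^k]=(\R^k)^{k+1}\times(S^{k-1})^{(k+1)k}\times[0,\infty]^{(k+1)k(k-1)}$ blockwise: the $\R^k$-coordinates of the vertices of $\bar i_\Delta(A,\lambda,\bfq_0)$ are polynomial in $(A,\lambda,\bfq_0)$; the direction coordinates all equal $A\,\pi_{ij}(\Delta)$, including at $\lambda=0$ by the limiting-direction convention of \thm{configgeom}(1); and the ratio coordinates are the constants $r_{ijl}(\Delta)\in(0,\infty)$. Injectivity holds for $\lambda>0$ by \thm{SimD} and for $\lambda=0$ by \thm{SimD-bdry}, and the two ranges of $\lambda$ have disjoint images (interior versus boundary of $C_{k+1}[\R^k]$), so $\bar i_\Delta$ is injective. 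For surjectivity onto $\Sim(\Delta)$: any point of the closure is a limit of configurations on which $r_{ijl}\equiv r_{ijl}(\Delta)$, and since each $r_{ijl}$ is continuous on all of $C_{k+1}[\R^k]$ by \thm{configgeom}(3), the limit configuration again has all ratios finite and nonzero; hence either it lies in $C_{k+1}(\R^k)$, and is in the image of $i_\Delta$, or else all $k+1$ points have collapsed with finite nonzero limiting ratios, i.e.\ it lies in the interior of the $(0,1,\dots,k)$ face and is in the image of $\bar i_\Delta(-,0,-)$. Thus $\bar i_\Delta$ is a continuous bijection onto $\Sim(\Delta)$.

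It remains to upgrade this to a diffeomorphism of manifolds-with-boundary, and this is the step I expect to demand the most care. On $C_{k+1}(\R^k)$ it is \thm{SimD}. Near a boundary point one uses local coordinates on $C_{k+1}[\R^k]$ adapted to the $(0,1,\dots,k)$ face --- a configuration near total collapse being recorded by a center-of-mass point in $\R^k$, an overall scale in $[0,\infty)$, and the shape data (the $\pi_{ij}$ and $r_{ijl}$) on the face, which is precisely the local structure furnished by the stratification of \thm{config} and \thm{configgeom}. In these coordinates $\bar i_\Delta$ sends $(A,\lambda,\bfq_0)$ to a center of mass depending affinely on $(\lambda,\bfq_0)$, the scale $\lambda$, and shape data determined smoothly by $A$; since $\lambda\ge0$ is the boundary-defining coordinate on both sides, this is a smooth map of manifolds-with-boundary. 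Its inverse reassembles $\bfq_0$, the scale $\lambda$ (equal to $\norm{\bfq_1-\bfq_0}$ on the interior and extended by $0$ on the face), and the pose $U(\q)=ps(\q)$; the one point that needs genuine checking is that $U(\q)$, defined via the polar decomposition in \prop{Sim-polar}, depends smoothly on $\q$ even when $\q$ lies on the boundary face, where only the directions $\pi_{ij}(\q)$ --- not the vertex positions --- carry information. Granting this, $\bar i_\Delta$ is a diffeomorphism $\Or(k)\times[0,\infty)\times\R^k\to\Sim(\Delta)$ of manifolds-with-boundary, which is the corollary. Everything else is a routine assembly of \thm{SimD} and \thm{SimD-bdry}.
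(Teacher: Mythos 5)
Your proposal is correct and follows exactly the route the paper intends: the corollary is stated without proof as the immediate concatenation of \thm{SimD} (the interior piece $\Or(k)\times(0,\infty)\times\R^k$) and \thm{SimD-bdry} (the $\lambda=0$ slice landing in the interior of the $(0,1,\dots,k)$ face), glued by extending $i_\Delta$ to $\lambda=0$. Your write-up simply supplies the details the paper leaves implicit --- continuity into $A_{k+1}[\R^k]$, surjectivity onto the closure via the constancy of the $r_{ijl}$, and smoothness of the pose on the boundary face --- all of which are sound.
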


%%%%%%%%%%%%%%%%%%%%%%
%%%%%%%%%%%%%%%%%%%%%%%%%%

\section{Simplices inscribed in spheres}
\label{sect:inscribed}

We next move to Step 1 of the method described in Section~\ref{sect:main-method}.  We will be looking at $C^\infty$-smooth embeddings of  $S^{k-1}$ in $\R^k$. We will always assume that these embeddings are {\em regular}, that is, the embedding induces an injection of tangent spaces everywhere. This is particularly relevant for embeddings of $S^1$ in $\R^2$ where we assume the tangent vector is nowhere zero. (Otherwise it is possible to smoothly describe an embedded curve with corners, by allowing the tangent vector to smoothly change to zero at each corner.)  Now given any such regular $C^\infty$-smooth embedding $\gamma\co S^{k-1}\hookrightarrow \R^k$, we can view the corresponding configuration space $C_{k+1}[\gamma(S^{k-1})]$ as a submanifold of $C_{k+1}[\R^k]$.

\begin{proposition} \label{prop:bdry-disjoint}
Given a non-degenerate simplex $\Delta\in C_{k+1}(\R^k)$, and a regular $C^\infty$-smooth embedding $\gamma\co S^{k-1}\hookrightarrow \R^k$ with corresponding configuration space $C_{k+1}[\gamma(S^{k-1})]$, then $\bdry \Sim(\Delta)$ and $\bdry C_{k+1}[\gamma(S^{k-1})]$ are disjoint in $\bdry C_{k+1}[\R^k]$.
\end{proposition}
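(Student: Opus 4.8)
The plan is to compare the two boundary strata directly. By Theorem~\ref{thm:SimD-bdry}, $\bdry\Sim(\Delta)$ sits entirely inside the \emph{interior} of the single face $(0,1,\dots,k)$ of $\bdry C_{k+1}[\R^k]$: every boundary configuration of $\Sim(\Delta)$ is a limit in which \emph{all} $k+1$ points collapse simultaneously to one point $\bfp\in\R^k$, with all the distance ratios $r_{ijl}$ equal to the (finite, nonzero) ratios $r_{ijl}(\Delta)$ coming from the non-degenerate simplex $\Delta$. So it suffices to show that no point of $\bdry C_{k+1}[\gamma(S^{k-1})]$ lies in the interior of that face, or — more precisely — that any point of $C_{k+1}[\gamma(S^{k-1})]$ which \emph{does} lie in the interior of the $(0,1,\dots,k)$ face has its $r_{ijl}$-coordinates constrained in a way incompatible with the values $r_{ijl}(\Delta)$ of a non-degenerate simplex.

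**The key step** is the local model for collapsing points on a smoothly embedded hypersurface. A point of $C_{k+1}[\gamma(S^{k-1})]$ in the interior of the $(0,1,\dots,k)$ face is obtained as a limit of genuine configurations $(\bfq_0(t),\dots,\bfq_k(t))$ of distinct points on $\gamma(S^{k-1})$ all converging to a single point $\bfq=\gamma(u)$. After rescaling, the directions $\pi_{ij}$ and ratios $r_{ijl}$ at such a limit point are governed by the first-order (and, where that degenerates, higher-order) behavior of $\gamma$ near $u$: by Theorem~\ref{thm:configgeom}(1), all the limiting direction vectors $\pi_{ij}$ lie in the tangent space $T_\bfq\gamma(S^{k-1})$, which is a \emph{$(k-1)$-dimensional} subspace of $\R^k$. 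Thus in any such limiting configuration the $k$ vectors $\pi_{10},\dots,\pi_{k0}$ all lie in a common $(k-1)$-plane, so the matrix $\Pi(\p)=[\pi_{10}\ \cdots\ \pi_{k0}]$ has rank at most $k-1$. But by Proposition~\ref{prop:P-matrix}, every configuration in $\Sim(\Delta)$ — and, since the $r_{ijl}$ are continuous on all of $C_{k+1}[\R^k]$ (Theorem~\ref{thm:configgeom}(3)) and the rank-$k$ conclusion of Proposition~\ref{prop:P-matrix} depends only on the $r_{ijl}$ values, every limit point of $\Sim(\Delta)$ in $\bdry C_{k+1}[\R^k]$ as well — has $\Pi$ of full rank $k$. (Concretely: at a point of $\bdry\Sim(\Delta)$ the Gram matrix $\Pi^T\Pi=[\cos\theta_{ij}]=P^2(\Delta)$ is still positive-definite because it is the same matrix computed from the unchanged ratios $r_{ijl}(\Delta)$, forcing $\rank\Pi=k$.) A rank-$k$ matrix and a rank-$\le k-1$ matrix cannot coincide, so the two boundary sets are disjoint.

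**I would organize the write-up** as: (i) invoke Theorem~\ref{thm:SimD-bdry} to locate $\bdry\Sim(\Delta)$ inside the interior of the $(0,1,\dots,k)$ face and record that there $\Pi(\p)$ has rank $k$; (ii) take an arbitrary point $\p\in\bdry C_{k+1}[\gamma(S^{k-1})]$ that also lies in $\bdry\Sim(\Delta)$, so in particular $\p$ is in the interior of the $(0,1,\dots,k)$ face, meaning all $k+1$ points project to a single point $\bfq\in\gamma(S^{k-1})$; (iii) apply Theorem~\ref{thm:configgeom}(1) to conclude each $\pi_{j0}(\p)\in T_\bfq\gamma(S^{k-1})$, hence $\rank\Pi(\p)\le k-1$; (iv) derive the contradiction with (i). The argument handles all regular embeddings at once and needs nothing beyond results already established; note in particular that regularity of $\gamma$ guarantees $T_\bfq\gamma(S^{k-1})$ is genuinely $(k-1)$-dimensional, which is exactly why the "regular" hypothesis appears in the statement.

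**The main obstacle** is making precise the claim that at a point in the interior of the top face \emph{all} the limiting directions $\pi_{ij}$ — not just the "pairwise tangent" ones singled out in Theorem~\ref{thm:configgeom}(1) — lie in $T_\bfq\gamma(S^{k-1})$. This is really a statement about the Sinha compactification: when all points collapse at the same rate at a single point of a submanifold, the blown-up configuration records a configuration in the tangent space. I expect this to follow from the functoriality of Theorem~\ref{thm:functor} applied to a tubular-neighborhood chart straightening $\gamma(S^{k-1})$ near $\bfq$ (so the tangent space becomes a coordinate $\R^{k-1}\subset\R^k$ and the top-face stratum of $C_{k+1}[\gamma(S^{k-1})]$ maps into the top-face stratum of $C_{k+1}[\R^{k-1}]\subset C_{k+1}[\R^k]$, whose direction coordinates all lie in that $\R^{k-1}$), together with Theorem~\ref{thm:configgeom}(1) to identify which $\R^{k-1}$ it is. If one prefers to avoid unpacking the stratification, an alternative is to argue directly with rescaled sequences $\bfq_i(t)\to\bfq$: write $\bfq_i(t)=\bfq+\epsilon(t)\bfv_i(t)$ with $\epsilon(t)\to0$ chosen so that $\max_i|\bfv_i(t)|$ stays bounded away from $0$ and $\infty$; a Taylor expansion of $\gamma$ shows every $\bfv_i(t)$ is, to leading order, tangent to $\gamma(S^{k-1})$ at $\bfq$ (the normal components are $O(\epsilon)$), hence so are all the limiting $\pi_{ij}=\lim(\bfv_i-\bfv_j)/|\bfv_i-\bfv_j|$; then conclude as before.
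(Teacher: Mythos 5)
Your proof is correct and takes essentially the same route as the paper: locate $\bdry \Sim(\Delta)$ in the interior of the $(0,1,\dots,k)$ face via Theorem~\ref{thm:SimD-bdry}, note that non-degeneracy of $\Delta$ forces the vectors $\pi_{j0}$ there to span $\R^k$ (the paper phrases this as ``not all coplanar'' rather than $\rank \Pi = k$), while Theorem~\ref{thm:configgeom}(1) forces every $\pi_{ij}$ at a fully collapsed configuration on $\gamma(S^{k-1})$ into the $(k-1)$-dimensional tangent space. The ``main obstacle'' you flag is not actually one: at a point of the $(0,1,\dots,k)$ face every pair of points projects to the same point of $\gamma(S^{k-1})$, so Theorem~\ref{thm:configgeom}(1) applies directly to each $\pi_{ij}$, which is exactly how the paper argues.
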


\begin{proof} From Theorem~\ref{thm:SimD-bdry} we know that $\bdry \Sim(\Delta)$ is in the $(0,1,\dots, k)$ face of $\bdry C_{k+1}[\R^k]$, so we restrict our attention to that boundary face. Since $\Delta$ is non-degenerate, then no hyperplane in $\R^k$ contains more than $k$ points of $\Delta$, and so not all of the $\pij(\Delta)$ vectors are coplanar.  However, for a point in $\bdry C_{k+1}[\gamma(S^{k-1})]$ all of the $\pij$ vectors must be coplanar. Hence $\Sim (\Delta)$ and $C_{k+1}[\gamma(S^{k-1})]$ are boundary disjoint.
\end{proof}

We next move to Step 2.   We take the standard embedding of the $(k-1)$ sphere in $\R^k$, that is $\id \co S^{k-1} \hookrightarrow \R^k$, and consider the corresponding configuration spaces. In this special case, we use the notation $C_{k+1}[S^{k-1}] = C_{k+1}[\id(S^{k-1})]$.

We need to show that there is a transverse intersection between $C_{k+1}[S^{k-1}]$ and $\Sim(\Delta)$ in $C_{k+1}[\R^k]$, in other words $C_{k+1}(\id)\transverse \Sim(\Delta)$. We also need to compute the homology class of the intersection of $C_{k+1}[S^{k-1}]\cap \Sim(\Delta)$  in $\Sim(\Delta)$.

\begin{proposition}
Given the configuration space $C_{k+1}[S^{k-1}]$ corresponding to the standard embedding of $S^{k-1}$ in $\R^k$, and given a non-degenerate simplex $\Delta \in C_{k+1}(\R^k)$, then $\Sim(\Delta)$ intersects $C_{k+1}[S^{k-1}]$ transversally, and the intersection $\Sim(\Delta)\cap C_{k+1}[S^{k-1}]$ is diffeomorphic to $\Or(k)$.
\label{prop:transverse}
\end{proposition}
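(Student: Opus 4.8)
The plan is to identify $\Sim(\Delta) \cap C_{k+1}[S^{k-1}]$ explicitly as a set, show it is diffeomorphic to $\Or(k)$, and then check transversality by a dimension count together with an argument that the tangent spaces span. First I would describe the intersection. A configuration $\q = (\bfq_0,\dots,\bfq_k) \in \Sim(\Delta)$ consists of $k+1$ distinct points; by Theorem~\ref{thm:SimD} it is determined by its pose $U(\q) \in \Or(k)$, a scale $\lambda > 0$, and the basepoint $\bfq_0$. Such a configuration lies on the unit sphere $S^{k-1}$ exactly when $\|\bfq_i\| = 1$ for all $i$. Geometrically, for each pose $U \in \Or(k)$ the simplex shape $i_\Delta(U,\lambda,\bfq_0)$ is rigid up to the similarity parameters $(\lambda,\bfq_0)$, and a non-degenerate $k$-simplex has a unique circumscribed sphere; so requiring that circumsphere to be the \emph{unit} sphere pins down $\lambda$ and $\bfq_0$ uniquely as smooth functions of $U$. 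Thus the map $U \mapsto i_\Delta(U, \lambda(U), \bfq_0(U))$ is a smooth bijection from $\Or(k)$ onto $\Sim(\Delta) \cap C_{k+1}[S^{k-1}]$, and one checks its inverse is the restriction of the pose map $ps$, which is smooth by Proposition~\ref{prop:Sim-polar}. This gives the diffeomorphism with $\Or(k)$, and in particular shows the intersection is nonempty.

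Next I would handle transversality. Inside $C_{k+1}[\R^k]$ (dimension $k(k+1)$ in the interior), $\Sim(\Delta)$ has dimension $\dim \Or(k) + 1 + k = \binom{k}{2} + k + 1$ and $C_{k+1}[S^{k-1}]$ has dimension $(k+1)(k-1) = k^2 - 1$. Their codimensions sum to
\[
\left(k(k+1) - \binom{k}{2} - k - 1\right) + \left(k(k+1) - (k^2-1)\right) = \left(\binom{k}{2} + k - 1\right) + (k+1),
\]
and since the intersection has dimension $\binom{k}{2} = \dim\Or(k)$, one verifies the dimensions are consistent with a transverse intersection, i.e. $\dim \Sim(\Delta) + \dim C_{k+1}[S^{k-1}] - k(k+1) = \binom{k}{2}$. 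The substantive point is then that at each intersection point $\q$ the two tangent spaces together span $T_\q C_{k+1}[\R^k]$. I would argue this by producing enough independent tangent directions: from $T_\q C_{k+1}[S^{k-1}]$ we get all infinitesimal motions keeping every point on the sphere, and from $T_\q \Sim(\Delta)$ we get the directions coming from varying the scale $\lambda$ and translating the basepoint $\bfq_0$ (the pose directions already lie in the intersection). The radial scaling direction $\q \mapsto (1+t)\q$ and the $k$ translation directions are transverse to $T_\q C_{k+1}[S^{k-1}]$ precisely because they move the common circumradius or circumcenter; combined with the tangential sphere motions these account for all $k+1$ missing dimensions. Non-degeneracy of $\Delta$ is what guarantees the circumsphere is well-defined and that these normal directions are genuinely independent.

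The main obstacle I expect is the transversality verification — specifically, showing cleanly that the $k+1$ "similarity" directions not tangent to the sphere (one dilation, $k$ translations) are linearly independent modulo $T_\q C_{k+1}[S^{k-1}]$. The cheapest way to see this is to use a defining-function description of $C_{k+1}[S^{k-1}]$ near $\q$: the $k+1$ functions $g_i(\q) = \|\bfq_i\|^2 - 1$ cut it out transversally in the interior, so it suffices to check that the $(k+1)\times(k+1)$ matrix of derivatives of the $g_i$ along the dilation and translation vector fields is invertible at $\q$. The dilation contributes a column proportional to $(1,\dots,1)^T$ (each $\|\bfq_i\|^2$ grows), and the translation by $\bfv$ contributes the column $(\langle \bfq_i, \bfv\rangle)_i$; invertibility of this matrix for all $\bfv$ ranging over a basis of $\R^k$ is equivalent to the affine span of $\{\bfq_0,\dots,\bfq_k\}$ being all of $\R^k$, which is exactly non-degeneracy of the simplex. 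I would also need a brief remark that the intersection stays in the interior $C_{k+1}(\R^k)$ and does not touch $\bdry C_{k+1}[\R^k]$, which follows since $\Delta$ non-degenerate forces $\lambda(U) > 0$ for every pose $U$ (guaranteed by Proposition~\ref{prop:bdry-disjoint}).
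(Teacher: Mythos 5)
Your proposal is correct and follows essentially the same route as the paper: the intersection is identified as an $\Or(k)$-orbit of one normalized inscribed representative, and transversality is reduced to the invertibility of the $(k+1)\times(k+1)$ matrix pairing the dilation and translation directions against the radial normal directions $\bfq_i$ (your defining-function matrix is exactly the paper's matrix $M$ up to transpose and a factor of $2$), with non-degeneracy of $\Delta$ supplying the nonzero determinant.
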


\begin{proof} Since $\Sim(\Delta)$ and $C_{k+1}[S^{k-1}]$ are boundary disjoint (Proposition~\ref{prop:bdry-disjoint}), we just need to consider $\Sim(\Delta) \cap C_{k+1}(S^{k-1})$. Now, every simplex $\Delta$ has a unique circumsphere; a $(k-1)$-sphere passing through all of the $k+1$ vertices. The radius of the circumsphere and coordinates of the circumcenter are well known (see for instance Proposition 9.7.3.7 \cite{MR882541}, or \cite{Coxeter, MR1503248}).  Indeed the circumradius $R$ of the simplex $\Delta$ is given by
\begin{equation*}
R^2 = - \frac{
\left|
\begin{matrix}
0 & d_{01}^2 & \cdots & d_{0k}^2 \\
d_{10}^2 & 0 & \cdots & d_{1k}^2 \\
\vdots & \vdots & & \vdots \\
d_{k0}^2 & d_{k2}^2 & \cdots & 0 
\end{matrix}
\right| 
} 
{2 \CM(\{\bfp_0,\dots, \bfp_k\})} .
\end{equation*}

Given $\Delta$, we can scale and translate it to give a new $\q\in\Sim(\Delta)$ with circumradius $R=1$, and circumcenter $\bfzero$. Thus $\q\in \Sim(\Delta) \cap C_{k+1}(S^{k-1})$ and the intersection is nonempty.  Intuitively,  we obtain all other points of $\Sim(\Delta) \cap C_{k+1}(S^{k-1})$ by rotating/reflecting $\q$ via $A\q$ for $A\in \Or(k)$.   More formally, we define the diffeomorphism $r\co \Or(k)\rightarrow  \Sim(\Delta) \cap C_{k+1}(S^{k-1})$ by $r(A) =A\q = (A\bfq_0, \dots , A\bfq_k)$. 

We now want to show that $\Sim(\Delta)$ intersects $C_{k+1}[S^{k-1}]$ transversally. Specifically, we take any inscribed simplex $\q\in \Sim(\Delta)\cap C_{k+1}[S^{k-1}]$, and we want to show that
$$ T_{\q} (\Sim(\Delta)) \oplus T_{\q} (C_{k+1}[S^{k-1}]) = T_{\q}(C_{k+1}[\R^k]).$$
We first note that the orthogonal complement of $T_{\q} (C_{k+1}[S^{k-1}])$ in $T_{\q}(C_{k+1}[\R^{k}])$ is the $(k+1)$-dimensional space with orthonormal basis $\mathcal{B} = \{(\bfq_0, 0, \dots, 0), \dots, (0,\dots,\bfq_{k})\}$. Next, observe that $T_{\q} (\Sim(\Delta)) \cong T_{\q}(\Or(k))\oplus T_{\q}[0,\infty) \oplus T_{\q}(\R^k)$. The tangent space $T_{\q}(\Sim(\Delta))$ thus contains the vectors $(\bfe_1, \dots, \bfe_1), \dots, (\bfe_k,\dots,\bfe_k)$ from the translational component of $\Sim(\Delta)$ as well as the vector $(\bfq_0,\dots,\bfq_{k})$ from scaling the configuration $\q$. Writing these vectors in the basis $\mathcal{B}$, we get the matrix:
\begin{equation*}
M = \left( 
\begin{matrix}
q_{0,0} & q_{1,0} & \cdots & q_{k,0} \\
q_{0,1} & q_{1,1} & \cdots & q_{k,1} \\
\vdots  & \vdots  &        &   \vdots  \\
q_{0,k} & q_{1,k} & \cdots & q_{k,k} \\
1       & 1       & \cdots & 1         \\
\end{matrix}
\right).
\end{equation*}
Subtracting the last column from the rest, we get 
\begin{equation*}
M' = \left( 
\begin{matrix}
\bfq_0 - \bfq_{k} & \bfq_1 - \bfq_{k} & \cdots & \bfq_{k-1} - \bfq_{k} & \bfq_{k} \\
0             & 0             & \cdots & 0             & 1       \\
\end{matrix}
\right).
\end{equation*}
The determinant of this matrix is $\pm 1$ multiplied by the determinant of the upper-left $k \cross k$ principal minor. But that determinant is positive because $\q \in \Sim(\Delta)$ and is non-degenerate. Thus the $k+1$ tangent vectors are linearly independent and we have proven transversality.
\end{proof}

Before we move on to determine the homology class corresponding to inscribed simplices, we pause to remember that $\Or(k)$ has two connected components. One component, $\SO(k)$, is a subgroup of $\Or(k)$, and consists of all orthogonal matrices with determinant $+1$. The other component contains all orthogonal matrices with determinant $-1$. 

In order to sensibly discuss homology classes, we restrict our attention to the submanifold of simplices diffeomorphic to $\SO(k)\times [0,\infty)\times \R^k$, which we denote $\Sim^+(\Delta)$. We could equally restrict our attention to $\Sim^-(\Delta) := \Sim(\Delta)\setminus\Sim^+(\Delta)$.

\begin{proposition} Given a non-degenerate simplex $\Delta=(\bfp_0,\dots, \bfp_k)\in C_{k+1}(\R^k)$, and given the configuration space $C_{k+1}[S^{k-1}]$ corresponding to the standard embedding of $S^{k-1}$ in $\R^k$, then in $\Sim(\Delta)$ 
$$H_*(\Or(k);\Z) \cong H_*(\Sim(\Delta) \cap C_{k+1}[S^{k-1}];\Z).$$

Moreover, the pose map is a diffeomorphism $ps\co \Sim^+(\Delta)\cap C_{k+1}[S^{k-1}] \rightarrow \SO(k)$
which induces an isomorphism on integral homology taking the top class of $\Sim^+(\Delta)\cap C_{k+1}[S^{k-1}]$ in $\Sim(\Delta^+)$ 
to the top class of $\SO(k)$. A similar result holds for $\Sim^-(\Delta)\cap C_{k+1}[S^{k-1}]$. 
\label{prop:homology}
\end{proposition}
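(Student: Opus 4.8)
The plan is to chain together the diffeomorphisms already established and then verify that the pose map restricted to the intersection is the "same" map as the one coming from these identifications, so that the homology statements follow. First I would recall from Proposition~\ref{prop:transverse} that we have a diffeomorphism $r\co \Or(k)\to \Sim(\Delta)\cap C_{k+1}[S^{k-1}]$ given by $r(A)=A\q$, where $\q$ is a fixed inscribed copy of $\Delta$ with circumradius $1$ and circumcenter $\bfzero$. This immediately gives $H_*(\Or(k);\Z)\cong H_*(\Sim(\Delta)\cap C_{k+1}[S^{k-1}];\Z)$, proving the first displayed isomorphism. Restricting $r$ to the identity component $\SO(k)$ lands in $\Sim^+(\Delta)\cap C_{k+1}[S^{k-1}]$ (since $A\q$ has the same orientation data as $\q$ when $\det A=+1$; concretely, $\Pi(A\q)=A\,\Pi(\q)$ and taking polar decompositions shows $U(A\q)=A\,U(\q)$, so $\det U(A\q)$ and $\det U(\q)$ have the same sign), and $r|_{\SO(k)}$ is a diffeomorphism onto $\Sim^+(\Delta)\cap C_{k+1}[S^{k-1}]$.

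Next I would compute the composition $ps\circ r|_{\SO(k)}\co \SO(k)\to \SO(k)$. Using $\Pi(A\q)=A\,\Pi(\q)=A\,U(\q)\,P(\Delta)$ and the uniqueness of the polar decomposition (since $\rank\Pi(A\q)=k$, from Proposition~\ref{prop:Sim-polar}), we get $ps(A\q)=U(A\q)=A\,U(\q)$. Thus $ps\circ r|_{\SO(k)}$ is left-translation by the fixed element $U(\q)\in\SO(k)$ (one checks $U(\q)\in\SO(k)$ since $\q\in\Sim^+(\Delta)$; if $\Delta$ was normalized so that $U(\Delta)=I_k$ as in the proof of Theorem~\ref{thm:SimD}, then in fact $U(\q)=I_k$ and the composition is exactly the identity). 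In either case $ps\circ r|_{\SO(k)}$ is a diffeomorphism of $\SO(k)$, hence so is $ps|_{\Sim^+(\Delta)\cap C_{k+1}[S^{k-1}]}$. Since $ps$ is the restriction of the globally defined pose map, this proves the "pose map is a diffeomorphism" claim.

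For the homology assertion, $ps|_{\Sim^+(\Delta)\cap C_{k+1}[S^{k-1}]}$ being a diffeomorphism induces an isomorphism on $H_*(-;\Z)$; and since $\Sim^+(\Delta)\cap C_{k+1}[S^{k-1}]$ is a closed connected orientable manifold (orientable because it is diffeomorphic to the Lie group $\SO(k)$) of the same dimension $\binom{k}{2}$ as $\SO(k)$, the isomorphism carries the fundamental class to $\pm$ the fundamental class; orienting $\Sim^+(\Delta)\cap C_{k+1}[S^{k-1}]$ by pullback along $ps$ makes this the top class as stated. The identical argument with $\SO(k)$ replaced by the non-identity coset $\Or(k)\setminus\SO(k)$ (which is also a closed orientable manifold diffeomorphic to $\SO(k)$ as a manifold) handles the $\Sim^-(\Delta)$ case. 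The only genuinely delicate point is the bookkeeping in the previous paragraph: verifying that $U(A\q)=A\,U(\q)$, i.e.\ that the abstract pose map agined by polar decomposition is genuinely $\SO(k)$-equivariant with respect to the left action $\q\mapsto A\q$. This follows from functoriality of polar decomposition under left multiplication by orthogonal matrices together with its uniqueness in the nonsingular case, both recorded in the linear algebra appendix, but it is the step where care is needed to make sure no orientation sign is lost.
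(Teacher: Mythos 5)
Your proposal is correct and follows essentially the same route as the paper: both use the diffeomorphism $r\co\Or(k)\to\Sim(\Delta)\cap C_{k+1}[S^{k-1}]$ from Proposition~\ref{prop:transverse} together with the identity $ps\circ r=\id$, which the paper asserts ``by construction'' and which you verify more carefully via $U(A\q)=A\,U(\q)$ and uniqueness of the polar decomposition. The one detail you omit is the justification of the phrase ``in $\Sim(\Delta^+)$'': the paper identifies the top class of the intersection with the top class of $\Sim^+(\Delta)$ by noting that $\Sim(\Delta)\cap C_{k+1}[S^{k-1}]$ is a deformation retract of $\Sim(\Delta)\cong\Or(k)\times[0,\infty)\times\R^k$.
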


\begin{proof}
Without loss of generality, we can normalize $\Delta=(\bfp_1,\dots, \bfp_k)$ so that it lies on $S^{k-1}$; that is $\Delta$ has circumcenter $\bfzero$ and circumradius $R=1$.  We then have the following maps
$$\Or(k)\xrightarrow{r} \Sim(\Delta)\cap C_{k+1}[S^{k-1}] \xrightarrow{ps} \Or(k).$$
Here, the map $r$ is the rotation/reflections diffeomorphism previously defined in Proposition~\ref{prop:transverse} That is, for $A\in \Or(k)$, we define $r(A)= A\Delta=(A\bfp_0, \dots , A\bfp_k)$. The map $ps$ is the pose map from Definition~\ref{def:pose}:  for $\q\in\Sim(\Delta)$, we define $ps(\q)=U(\q)$ (where $U(\q)$ is the unique orthogonal matrix such that $\Pi(\q) = U(\q)\Pi(\Delta)$). 
From Proposition~\ref{prop:transverse} we know that $r$ is a diffeomorphism, and by construction $ps\circ r = \id$. Hence the pose map is also a diffeomorphism. 

From Proposition~\ref{prop:transverse}, we recall that $\Sim(\Delta)$ intersects $C_{k+1}[S^{k-1}]$ transversally, and moreover that the intersection is diffeomorphic to $\Or(k)$. Now, since $\Sim(\Delta)\cong \Or(k)\times [0,\infty)\times \R^k$, we have $\Sim(\Delta)\cap C_{k+1}[S^{k-1}]$ is a deformation retract of $\Sim(\Delta)$.

When we put this together and take the homology of the spaces we see
$$H_*(\Or(k))\xrightarrow{r_*} H_*( \Sim(\Delta)\cap C_{k+1}[S^{k-1}]) \cong H_*(\Sim(\Delta)) \xrightarrow{ps_*} H_*(\Or(k))
$$
Since $ps_*  \circ r_* = \id$, we know $ps_*$ is an isomorphism. Hence 
$$H_*(\Or(k)) \cong H_*(\Sim(\Delta) \cap C_{k+1}[S^{k-1}]).$$

We could also choose to restrict $r$ and $ps$ as follows: 
$$\SO(k)\xrightarrow{r} \Sim^+(\Delta)\cap C_{k+1}(S^{k-1}) \xrightarrow{ps} \SO(k).$$
Then, repeating the previous argument gives 
$$ps_*([\Sim^+(\Delta) \cap C_{k+1}(S^{k-1})] )=[r(\SO(k))] = [\Sim^+(\Delta)] \in H_{\frac{k(k-1)}{2}}(\Sim^+(\Delta)).$$
\end{proof}

We now have all the pieces needed for our main theorem. 

\begin{theorem} \label{thm:main-theorem}
Suppose $\gamma\co S^{k-1}\hookrightarrow \R^k$ is a $C^\infty$-smooth embedding of $S^{k-1}$ in $\R^k$ isotopic to the identity through a differentiable isotopy in $\R^k$, with a corresponding embedding of compactified configuration spaces $C_{k+1}[\gamma]\co C_{k+1}[S^{k-1}]\hookrightarrow C_{k+1}[\R^k]$. Assume that $\Delta\in C_{k+1}(\R^k)$ is a non-degenerate simplex.

Then for all $\epsilon >0$, there is a  $C^\infty$-open neighborhood of $\gamma$, in which there is, for all $m$, a $C^m$-dense set of smooth embeddings $\gamma':S^{k-1}\hookrightarrow \R^k$ isotopic to the identity  through a differentiable isotopy in $\R^k$, such that $\| C_{k+1}[\gamma'] - C_{k+1}[\gamma]\|_0< \epsilon$, and $C_{k+1}[\gamma'] \transverse \Sim(\Delta)$. 
Moreover in $\Sim(\Delta^+)$, both $C_{k+1}[S^{k-1}] \cap \Sim^+(\Delta)$ and $C_{k+1}[\gamma'(S^{k-1})] \cap \Sim^+(\Delta)$ represent the top homology class of $\SO(k)$.   A similar result holds for $\Sim^-(\Delta)$.
\end{theorem}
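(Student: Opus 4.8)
The plan is to assemble the four preparatory ingredients already established in Sections~\ref{sect:simplices} and \ref{sect:inscribed} and feed them directly into the abstract configuration-transversality machinery of \thm{method}. First I would check the hypotheses of \thm{method} one by one with $Z = \Sim(\Delta)$, $n = k+1$, $l = k-1$, and the standard embedding $i = \id\co S^{k-1}\hookrightarrow\R^k$. By \thm{SimD} and \thm{SimD-bdry}, $\Sim(\Delta)\cap C_{k+1}(\R^k)$ is a submanifold of $C_{k+1}(\R^k)$ with $\bdry\Sim(\Delta)\subset\bdry C_{k+1}[\R^k]$, which gives condition~1(a); \prop{bdry-disjoint} gives boundary-disjointness of $\Sim(\Delta)$ and $C_{k+1}[\gamma(S^{k-1})]$, which is condition~1(b); and \prop{transverse} gives the transverse, nonempty intersection $C_{k+1}[\id]\transverse\Sim(\Delta)$, which is condition~2(a). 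With all hypotheses verified, \thm{method} immediately yields, for every $\epsilon>0$, a $C^\infty$-open neighborhood of $\gamma$ containing a $C^m$-dense (for every $m$) set of smooth embeddings $\gamma'\co S^{k-1}\hookrightarrow\R^k$ with $\|C_{k+1}[\gamma'] - C_{k+1}[\gamma]\|_0 < \epsilon$, with $C_{k+1}[\gamma']\transverse\Sim(\Delta)$, and with $C_{k+1}[\id(S^{k-1})]\cap\Sim(\Delta)$ and $C_{k+1}[\gamma'(S^{k-1})]\cap\Sim(\Delta)$ representing the same homology class in $\Sim(\Delta)$.

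The one genuine subtlety is that \thm{method} as stated in the excerpt invokes Haefliger's theorem (\thm{isotopy}) in Step 4, which may route the isotopy through a larger $\R^K$; here we instead want to stay in $\R^k$ and to keep the conclusion that $\gamma'$ is isotopic to the identity \emph{in $\R^k$}. So next I would point out, as the paper already flags in the paragraph before \thm{method}, that when $\gamma$ is itself isotopic to $\id$ through a differentiable isotopy in $\R^k$, Step 4 can be run with that isotopy directly: one feeds $E\co S^{k-1}\times I\to\R^k$ with $E(-,0)=\id$, $E(-,1)=\gamma'$ into the functoriality of \thm{functor} to get a homotopy $H\co C_{k+1}(S^{k-1})\times I\to C_{k+1}(\R^k)$ between $C_{k+1}[\id]$ and $C_{k+1}[\gamma']$, then applies the Transversality Homotopy Extension Theorem to replace $H$ by a homotopy $H'$ transverse to $\Sim(\Delta)$ with the same endpoints, so that $C_{k+1}[\id(S^{k-1})]\cap\Sim(\Delta)$ and $C_{k+1}[\gamma'(S^{k-1})]\cap\Sim(\Delta)$ are cobordant inside $\Sim(\Delta)$ and hence homologous. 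I would also note that the perturbed embeddings $\gamma'$ stay in the isotopy class of the identity: they are $C^\infty$-close to $\gamma$, hence isotopic to $\gamma$, hence isotopic to $\id$. This is \thm{homology} applied with $\eta = \gamma'$.

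With the homology statement in hand in the generality of $\Sim(\Delta)$, the final step is to pass to the orientation-preserving piece and identify the class. Restricting everything to the component $\Sim^+(\Delta)\cong\SO(k)\times[0,\infty)\times\R^k$ and its boundary-interior slice $\Sim^+(\Delta)\cap C_{k+1}[S^{k-1}]$, \prop{homology} tells us that $C_{k+1}[\id(S^{k-1})]\cap\Sim^+(\Delta)$ represents the image under $r_*$ of the fundamental class of $\SO(k)$, equivalently the top class of $\Sim^+(\Delta)$ in $H_{k(k-1)/2}(\Sim^+(\Delta);\Z)$, and the pose map carries this to the top class of $\SO(k)$. Since $C_{k+1}[\gamma'(S^{k-1})]\cap\Sim^+(\Delta)$ is homologous to $C_{k+1}[\id(S^{k-1})]\cap\Sim^+(\Delta)$ in $\Sim^+(\Delta)$ by the previous paragraph, it represents the same top class; in particular the pose map is surjective on it, so the family of inscribed similar simplices on $\gamma'(S^{k-1})$ realizes every pose $U\in\SO(k)$. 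The argument for $\Sim^-(\Delta)$ is identical, using the other component of $\Or(k)$, as \prop{transverse} and \prop{homology} already record. I expect no real obstacle beyond being careful in Step~4 to use the $\R^k$-isotopy hypothesis rather than Haefliger's larger-codimension statement, and to confirm that the density and $C^0$-closeness conclusions of \thm{method} are not disturbed by that substitution — which they are not, since only Step~4's cobordism conclusion depends on the ambient dimension of the isotopy.
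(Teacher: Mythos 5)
Your proposal is correct and follows the same route as the paper: the paper's proof is precisely the one-line assembly of Corollary~\ref{cor:SimD}, Propositions~\ref{prop:transverse} and~\ref{prop:homology} fed into Theorem~\ref{thm:method}, with the substitution of the given $\R^k$-isotopy for Haefliger's theorem in Step~4 already flagged in the paragraph preceding Theorem~\ref{thm:method}. Your extra care in verifying that this substitution does not disturb the density and $C^0$-closeness conclusions is a useful elaboration but not a departure from the paper's argument.
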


\begin{proof} This is Corollary~\ref{cor:SimD}, and Propositions~\ref{prop:transverse} and~\ref{prop:homology} applied to Theorem~\ref{thm:method}.
\end{proof}

We immediately recover Gromov's result~\cite{MR244907} (but for $C^\infty$-smooth embeddings) as a corollary. 

\begin{corollary} For any non-degenerate $k$-simplex $\Delta$ in $\R^k$, there is a dense family of smoothly embedded $(k-1)$-spheres in $\R^k$ isotopic to the identity through a differentiable isotopy in $\R^k$, such that the subset of $\Sim(\Delta)$ of simplices inscribed in each embedded sphere contains a similar simplex corresponding to each $U\in \Or(k)$. 
\label{cor1}
\end{corollary}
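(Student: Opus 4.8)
The plan is to show that Corollary~\ref{cor1} is a direct consequence of Theorem~\ref{thm:main-theorem} together with the homological information we have accumulated about $\Sim(\Delta)$. The only thing to verify is that ``the intersection represents the top homology class of $\SO(k)$'' is strong enough to force the presence of an inscribed similar simplex of \emph{every} pose $U$, and that passing from $\Sim^+(\Delta)$ to all of $\Sim(\Delta)$ recovers every $U\in\Or(k)$, not just those in $\SO(k)$.

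First I would fix $\gamma'$ to be any one of the embeddings produced by Theorem~\ref{thm:main-theorem}; such $\gamma'$ form a $C^m$-dense set (for every $m$) in a $C^\infty$-open neighborhood of the standard embedding, and in particular a dense family of smoothly embedded $(k-1)$-spheres isotopic to the identity. For such a $\gamma'$, the intersection $X^+ := C_{k+1}[\gamma'(S^{k-1})]\cap\Sim^+(\Delta)$ is transverse and hence a closed manifold (closed because $\bdry\Sim(\Delta)$ and $\bdry C_{k+1}[\gamma'(S^{k-1})]$ are disjoint by Step 1 / Proposition~\ref{prop:bdry-disjoint}, so the intersection meets no boundary; compact because $C_{k+1}[\gamma'(S^{k-1})]$ is compact). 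Theorem~\ref{thm:main-theorem} says $[X^+]$ equals the class of $C_{k+1}[S^{k-1}]\cap\Sim^+(\Delta)$ in $H_{k(k-1)/2}(\Sim^+(\Delta))$, and by Proposition~\ref{prop:homology} the pose map $ps$ carries that latter class to the fundamental class $[\SO(k)]\in H_{k(k-1)/2}(\SO(k);\Z)$ (note $\dim\SO(k)=k(k-1)/2$, so this is indeed the top class).

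Next I would run the surjectivity argument. Consider the restriction $ps|_{X^+}\co X^+\to\SO(k)$. By functoriality of homology and the equality of classes, $(ps|_{X^+})_*[X^+] = ps_*[X^+] = [\SO(k)]\neq 0$ in $H_{k(k-1)/2}(\SO(k);\Z)\cong\Z$. If $ps|_{X^+}$ were not surjective, its image would miss some point $U_0\in\SO(k)$, hence factor through $\SO(k)\setminus\{U_0\}$, an open manifold of the same dimension, whose top homology vanishes; then $(ps|_{X^+})_*[X^+]$ would be zero, a contradiction. Therefore $ps|_{X^+}$ is onto, i.e.\ for every $U\in\SO(k)$ there is a simplex $\q\in X^+$ with pose $U$, and $\q$ is a simplex similar to $\Delta$ whose $k+1$ vertices lie on $\gamma'(S^{k-1})$. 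Running the identical argument with $\Sim^-(\Delta)$ in place of $\Sim^+(\Delta)$ (the ``similar result'' clause of Theorem~\ref{thm:main-theorem}, with the reflection component of $\Or(k)$ playing the role of $\SO(k)$) handles the poses $U$ with $\det U=-1$. Taking the union over both components gives an inscribed similar simplex of every pose $U\in\Or(k)$, which is the statement of the corollary.

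The steps are all routine once Theorem~\ref{thm:main-theorem} is in hand; the only mild subtlety I would be careful about is the ``missing a point kills the top class'' step, which needs $X^+$ to be a closed oriented (or at least $\Z$-fundamental-class-carrying) manifold of exactly the dimension of $\SO(k)$ --- this is exactly what transversality plus boundary-disjointness gives, since $\dim\Sim(\Delta) = \dim\Or(k) + 1 + k$ and $\dim C_{k+1}[\gamma'(S^{k-1})] = (k-1)(k+1)$ inside $\dim C_{k+1}[\R^k] = k(k+1)$, so the expected intersection dimension is $\dim\Or(k) = k(k-1)/2$, matching $\dim\SO(k)$. I would also remark that orientability of $X^+$ is not actually needed: it suffices to use $\Z/2$ coefficients for the surjectivity argument, since $H_{k(k-1)/2}(\SO(k);\Z/2)\neq 0$ and the fundamental class mod $2$ always exists; but since Theorem~\ref{thm:main-theorem} is stated with $\Z$ coefficients we may as well keep $\Z$ and simply note the argument only uses that the image of $[X^+]$ under $ps_*$ is nonzero.
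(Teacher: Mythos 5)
Your proposal is correct and follows essentially the same route as the paper, which states Corollary~\ref{cor1} as an immediate consequence of Theorem~\ref{thm:main-theorem} and relies (explicitly only later, in the proof of Corollary~\ref{cor2}) on exactly the degree-theoretic fact you spell out: the intersection carries the top class of $\SO(k)$ under the pose map, and a map whose image of the top class is nonzero cannot miss a point of $\SO(k)$. Your expansion, including the dimension count and the $\Z/2$ hedge on orientability, just makes explicit what the paper leaves implicit.
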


In fact we have proved more than Gromov.

\begin{corollary}  For any non-degenerate $k$-simplex $\Delta$ in $\R^k$, there is a dense family of smoothly embedded $(k-1)$-spheres in $\R^k$ isotopic to the identity through a differentiable isotopy in $\R^k$, 
such that the subset $\Sim^+(\Delta)\cap C_{k+1}[\gamma(S^{k-1})]$ of simplices inscribed in each embedded sphere is a smooth orientable submanifold of $\Sim^+(\Delta)$. Furthermore, the pose map $ps\co  \Sim^+(\Delta)\cap C_{k+1}[\gamma(S^{k-1})]\rightarrow \SO(k)$ is onto and is a degree 1 map. 

In particular, the set of triangles similar to a given triangle $\triangle ABC$ inscribed in a generic smooth plane curve is a collection of loops $L_1,\dots, L_n$. Furthermore, the sum of the degrees of the pose maps $ps_1,\dots, ps_n$ is one.
\label{cor2}
\end{corollary}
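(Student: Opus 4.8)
The plan is to deduce this from \thm{main-theorem}, \cor{SimD}, and \prop{homology}, and then to specialize to the planar case $k=2$. First I would apply \thm{main-theorem} with $\gamma$ taken to be (isotopic to) the standard inclusion: it produces, for every $\epsilon>0$ and every $m$, a $C^m$-dense family of $C^\infty$ embeddings $\gamma'\co S^{k-1}\hookrightarrow\R^k$, each isotopic to the identity in $\R^k$, with $C_{k+1}[\gamma']\transverse\Sim(\Delta)$ and with the intersection $N_{\gamma'}:=\Sim^+(\Delta)\cap C_{k+1}[\gamma'(S^{k-1})]$ representing the same class in $H_*(\Sim^+(\Delta))$ as $N_{\id}:=\Sim^+(\Delta)\cap C_{k+1}[S^{k-1}]$. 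Because $\Sim(\Delta)$ is a submanifold of $C_{k+1}[\R^k]$ (\cor{SimD}) and the intersection with $C_{k+1}[\gamma'(S^{k-1})]$ is transverse, $N_{\gamma'}$ is a smooth submanifold of $\Sim^+(\Delta)$; counting dimensions, $\dim N_{\gamma'}=\dim\Sim^+(\Delta)+\dim C_{k+1}[\gamma'(S^{k-1})]-\dim C_{k+1}[\R^k]=(k(k-1)/2+k+1)+(k^2-1)-(k^2+k)=k(k-1)/2=\dim\SO(k)$. By \prop{bdry-disjoint}, $N_{\gamma'}$ lies in the interior of $C_{k+1}[\R^k]$, so it is a closed subset of the compact manifold $C_{k+1}[\gamma'(S^{k-1})]$ (\thm{config}); since it also avoids $\bdry\Sim^+(\Delta)$, which sits in $\bdry C_{k+1}[\R^k]$ by \thm{SimD-bdry}, it follows that $N_{\gamma'}$ is a closed manifold, and likewise $N_{\id}\cong\SO(k)$.

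Next I would check orientability, using the standard fact that a transverse intersection of two orientable submanifolds of an orientable manifold is orientable (the normal bundle of $N_{\gamma'}$ is the sum of two restricted normal bundles, each orientable here). We have: $C_{k+1}(\R^k)$ is open in $(\R^k)^{k+1}$, hence orientable; $\Sim^+(\Delta)\cong\SO(k)\times(0,\infty)\times\R^k$ is orientable since $\SO(k)$ is a compact connected Lie group; and $C_{k+1}(\gamma'(S^{k-1}))\cong C_{k+1}(S^{k-1})$ is open in $(S^{k-1})^{k+1}$, hence orientable. Fixing such orientations, consistently for $\gamma'$ and for $\id$, makes $N_{\gamma'}$ and $N_{\id}$ closed oriented manifolds of dimension $k(k-1)/2$. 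This establishes the first sentence of the corollary.

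For the degree statement I would factor the restricted pose map as $ps|_{N_{\gamma'}}=ps\circ\iota$, where $\iota\co N_{\gamma'}\hookrightarrow\Sim^+(\Delta)$ is the inclusion. Since $\Sim^+(\Delta)$ deformation retracts onto $N_{\id}\cong\SO(k)$ (\prop{homology}), the group $H_{k(k-1)/2}(\Sim^+(\Delta))\cong\Z$ is generated by $[N_{\id}]$, and \thm{main-theorem} gives $\iota_*[N_{\gamma'}]=[N_{\id}]$ there. By \prop{homology}, $ps$ restricts to a diffeomorphism $N_{\id}\to\SO(k)$, so with compatibly chosen orientations $ps_*[N_{\id}]=[\SO(k)]$; hence $ps_*[N_{\gamma'}]=[\SO(k)]$, i.e.\ $ps|_{N_{\gamma'}}$ has degree $1$. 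A smooth map of equal-dimensional manifolds with compact domain that is not surjective has a regular value off its image and therefore degree $0$, so our degree-$1$ map is onto: every pose $U\in\SO(k)$ is realized by an inscribed simplex. Replacing $\Sim^+(\Delta)$ by $\Sim^-(\Delta)$ and $\SO(k)$ by the other component of $\Or(k)$ gives the corresponding statement there.

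Finally, for the planar statement I set $k=2$, so $S^{k-1}=S^1$, $\R^k=\R^2$, and $\SO(2)\cong S^1$. Every smooth embedding $S^1\hookrightarrow\R^2$ is differentiably isotopic in $\R^2$ to the round circle (smooth Schoenflies), so \thm{main-theorem} applies; for $\gamma'$ in the resulting dense family, $N_{\gamma'}$ is a closed oriented $1$-manifold, i.e.\ a finite disjoint union of loops $L_1,\dots,L_n$, with $n\geq1$ since $\deg(ps|_{N_{\gamma'}})=1\neq0$. Writing $ps_i:=ps|_{L_i}\co L_i\to\SO(2)$, additivity of Brouwer degree over connected components gives $\sum_{i=1}^n\deg(ps_i)=\deg(ps|_{N_{\gamma'}})=1$. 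The one genuinely delicate point throughout is the orientation bookkeeping: one must orient $C_{k+1}(\R^k)$, $\Sim^+(\Delta)$, and the sphere configuration space so that the homology identity of \thm{main-theorem} and the diffeomorphism of \prop{homology} combine with consistent signs, yielding degree exactly $+1$ rather than $\pm1$. Everything else is a direct application of results already in hand, with compactness of $N_{\gamma'}$ resting on the boundary-disjointness of \prop{bdry-disjoint}.
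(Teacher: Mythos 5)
Your proposal is correct and follows essentially the same route as the paper's (much terser) proof: invoke the main theorem/homology identity for the transverse intersection, use that the pose map is a diffeomorphism on $\Sim^+(\Delta)\cap C_{k+1}[S^{k-1}]$ for the standard sphere, and conclude degree $1$ by homotopy invariance, with $\SO(2)\cong S^1$ and additivity of degree over components handling the planar case. The extra details you supply — the dimension count, compactness via boundary-disjointness, orientability of the transverse intersection, and deducing surjectivity from nonzero degree — are all sound and merely make explicit what the paper leaves implicit.
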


\begin{proof} The first statement follows from Theorem~\ref{thm:homology}.  That the pose map is onto follows from Corollary~\ref{cor1}.  For the standard embedding of $S^{k-1}$ in $\R^k$, the pose map is a diffeomorphism, and hence of degree 1. Proposition~\ref{prop:homology} and homotopy invariance show the pose map is always of degree 1 for our embeddings.

When $k=2$, we know $\SO(2)\cong S^1$. Hence the set of inscribed triangles similar to $\Delta$ is a collection of loops.
\end{proof}

\begin{figure}
\includegraphics[width=5.5in]{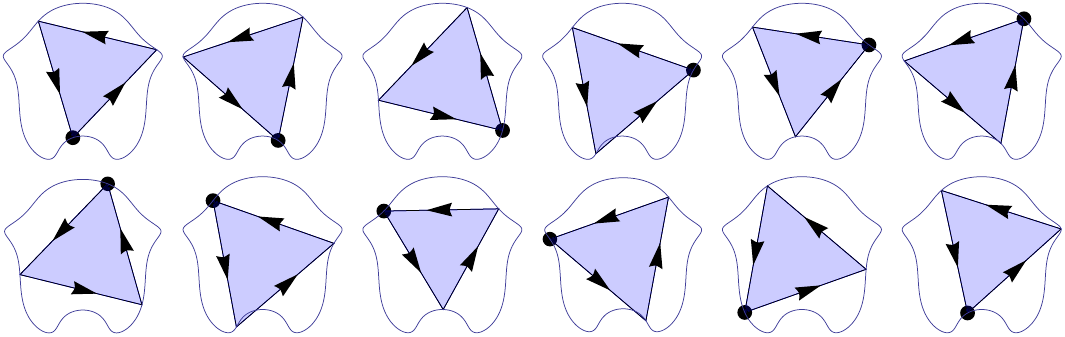}
\hphantom{.}
\caption{In this irregular embedding of a circle in the plane, we see a series of inscribed equilateral triangles. By following the highlighted vertex around the curve, we see there is a loop of such inscribed triangles.}
\label{fig:triangle-loop}
\end{figure}

In \cite{phd-Matschke}, Matschke proves that for generic $C^\infty$-smooth embeddings of the circle in the plane, there are an odd number of loops of inscribed triangles similar to $\Delta$ that wind an odd number of times around the embedded circle. Figure~\ref{fig:triangle-loop} shows such a loop of inscribed equilateral triangles in an irregular embedding of a circle in the plane.
We recover this Matschke's result in Corollary~\ref{cor2}, and strengthen it with the fact that the degree of the map is 1. 

We end by again noting that Meyerson~\cite{MR600575} and Nielsen~\cite{MR1181760}  have results about inscribed triangles that are for Jordan curves (just assuming continuity of the embedding). By adding both a generic and smoothness assumption on our embeddings, we are able to provide more information about the structure of the set of inscribed triangles.

%%%%%%%%%%%%%%%%%%%%%%%%%%%%%%%%%
%%%%%%%%%%%%%%%%%%%%%%%%%%%%%%%%

\section*{Acknowledgements}
We would like to thank all the people who have discussed these problems with us over the years, especially Jordan Ellenberg, Benjamin Matschke,  and  Gunter Ziegler.
%%%%%%%%%%%%%%%%%%%%%%%%%%%%%%

\bibliography{simplices}{}
\bibliographystyle{plain}

%%%%%%%%%%%%%%%%%%%%%%%%%
%%%%%%%%%%%%%%%%%%%%%%%%

\appendix

\section{Results from Linear Algebra}\label{append:lin-alg}

We will  let $M_n$ denote the set of all $n\times n$ matrices, and let $M_{n,m}$ denote the set of all $n\times m$ matrices. Recall that a symmetric matrix is a square matrix that is equal to its transpose: $A=A^T$. That is, $A=[a_{ij}]$ is symmetric if and only if $a_{ij}=a_{ji}$.
%Recall that a Hermitian matrix is a complex square matrix that is equal to its conjugate transpose. That is $A=[a_{ij}]$ is Hermitian if and only if $a_{ij} = \overline{a_{ji}}$. In this case we write $A=A^*$. Hermitian matrices are the complex extension of real symmetric matrices. The matrices we will work with are all real. 
Also recall that 
\begin{compactitem}
\item An $n\times n$ symmetric real matrix $M$ is {\em positive-definite} if $\bfx^*M\bfx >0$ for all $\bfx \in \R^n\setminus\{\bfzero\}$. 
\item An $n\times n$ symmetric real matrix $M$ is {\em positive semidefinite} of {\em non-negative-definite} if $\bfx^*M\bfx \geq 0$ for all  $\bfx \in \R^n$. 
\item Similar definitions hold for negative-definite and negative semidefinite.
\end{compactitem}
%
%\item An $n\times n$ Hermitian complex matrix $M$ is {\em positive definite} if $\bfx^*M\bfx >0$ for all $\bfx \in \C^n\setminus\{\bfzero\}$. 
%\item An $n\times n$ Hermitian complex matrix $M$ is {\em positive semidefinite} of {\em non-negative definite} if $\bfx^*M\bfx \geq 0$ for all  $\bfx \in \C^n$. 
%\item Similar definitions hold for negative definite and negative semidefinite.

We now review some theorems about symmetric matrices.

\begin{theorem}[See \cite{Horn-Johnson} Theorem 7.2.6]
Let $A$ be an $n\times n$ symmetric and positive semidefinite matrix, let $r=\rank(A)$, and let $k=\{2, 3, \dots\}$. Then there is a unique symmetric positive semidefinite matrix $B$ such that $B^k=A$.
\label{HJ-726}
\end{theorem}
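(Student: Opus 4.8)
The plan is to reduce everything to the spectral theorem for real symmetric matrices together with the uniqueness of non-negative real $k$-th roots of non-negative reals. Fix an integer $k\geq 2$. First I would diagonalize: since $A$ is symmetric, write $A = Q\Lambda Q^T$ with $Q\in\Or(n)$ and $\Lambda = \operatorname{diag}(\lambda_1,\dots,\lambda_n)$; positive semidefiniteness forces $\lambda_i\geq 0$ for all $i$, with exactly $r$ of them strictly positive. For existence, set $B := Q\,\Lambda^{1/k}\,Q^T$, where $\Lambda^{1/k} := \operatorname{diag}(\lambda_1^{1/k},\dots,\lambda_n^{1/k})$ and each $\lambda_i^{1/k}$ is the unique non-negative real $k$-th root. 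Then $B$ is symmetric, its eigenvalues $\lambda_i^{1/k}$ are non-negative so $B$ is positive semidefinite, and $B^k = Q\,(\Lambda^{1/k})^k\,Q^T = Q\Lambda Q^T = A$. Since $B$ and $A$ have the same kernel, $\rank B = \rank A = r$.

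For uniqueness, suppose $C$ is any symmetric positive semidefinite matrix with $C^k = A$. The key observation is that $C$ commutes with $A$: indeed $CA = C\,C^k = C^{k+1} = C^k\,C = AC$. Consequently $C$ preserves each eigenspace $E_\mu = \ker(A-\mu I)$ of $A$. Fixing such a $\mu\geq 0$, the restriction $C|_{E_\mu}$ is a symmetric positive semidefinite operator on $E_\mu$ with $(C|_{E_\mu})^k = \mu\,\id_{E_\mu}$. Diagonalizing $C|_{E_\mu}$, each of its eigenvalues is a non-negative real whose $k$-th power equals $\mu$, hence equals $\mu^{1/k}$; therefore $C|_{E_\mu} = \mu^{1/k}\,\id_{E_\mu} = B|_{E_\mu}$. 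As the eigenspaces of $A$ span $\R^n$, this gives $C = B$.

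An alternative route to uniqueness, closer to the cited source, is to use Lagrange interpolation to build a real polynomial $p$ with $p(\lambda_i) = \lambda_i^{1/k}$ for all $i$, so that $B = p(A)$; then any $C$ with $C^k=A$ commutes with $A$ and hence with $B = p(A)$, and two commuting real symmetric matrices are simultaneously orthogonally diagonalizable. In a common eigenbasis $B$ and $C$ are diagonal with non-negative entries $b_j,c_j$ satisfying $b_j^k = c_j^k$, whence $b_j = c_j$ and $B = C$. I would likely present the eigenspace argument and mention the polynomial version as a remark.

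The only genuinely delicate point is the uniqueness half, and within it the one-line identity $CA = AC$ extracted from $A = C^k$; once that is available the argument is forced by the real spectral theorem and the elementary uniqueness of non-negative $k$-th roots. Everything else — the construction of $B$, its symmetry and semidefiniteness, the power computation, and the rank count — is routine bookkeeping on the diagonalization.
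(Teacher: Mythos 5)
Your proof is correct. Note, though, that the paper does not prove this statement at all: it is quoted verbatim (as Theorem~\ref{HJ-726}) from Horn and Johnson \cite{Horn-Johnson}, Theorem 7.2.6, and used as a black box in Proposition~\ref{prop:P-matrix}. So there is no in-paper argument to compare against; the relevant comparison is with the cited source, and your ``alternative route'' via Lagrange interpolation ($B=p(A)$, hence any $k$-th root $C$ commutes with $B$, then simultaneous diagonalization) is essentially the Horn--Johnson proof. Your primary eigenspace argument is equally valid and self-contained: the identity $CA=C^{k+1}=AC$ is exactly the right hinge, the restriction $C|_{E_\mu}$ is indeed self-adjoint and positive semidefinite on the invariant subspace $E_\mu$, and uniqueness of non-negative real $k$-th roots finishes it. One small bonus of your write-up is the observation $\rank B=\rank A$, which explains why the hypothesis $r=\rank(A)$ appears in the statement even though it is never used in the conclusion as quoted (the paper later needs the positive-definite refinement, recorded in Remark~\ref{rmk-726}, and your construction makes that refinement immediate since $\lambda_i>0$ for all $i$ forces $\lambda_i^{1/k}>0$).
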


\begin{remark} While this theorem only guarantees a unique positive {\em semidefinite} square root for a  {\em semidefinite} matrix $A$, an examination of the proof in \cite{Horn-Johnson} shows that more has been proven. The proof shows there is a unique positive {\em definite} square root for a positive {\em definite} matrix $A$.
\label{rmk-726}
\end{remark}

\begin{theorem}[See \cite{Horn-Johnson} Theorem 7.2.7]
Let $A$ be an $n\times n$ symmetric matrix.
If $A=B^TB$, with $B$ an $m\times n$ matrix, then $A$ is positive definite if and only if $B$ has full column rank.
\label{HJ-727}
\end{theorem}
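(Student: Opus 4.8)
The plan is to prove both directions simultaneously by computing the quadratic form $\mathbf{x}^T A \mathbf{x}$ directly in terms of $B$. The key identity is that for every $\mathbf{x}\in\R^n$,
$$\mathbf{x}^T A \mathbf{x} = \mathbf{x}^T B^T B \mathbf{x} = (B\mathbf{x})^T(B\mathbf{x}) = \|B\mathbf{x}\|^2 \geq 0,$$
so $A$ is automatically positive semidefinite. From here the positive-definiteness of $A$ is equivalent to the statement that $\|B\mathbf{x}\|^2 > 0$ for every nonzero $\mathbf{x}$, which is in turn equivalent to $B\mathbf{x}\neq\mathbf{0}$ for every nonzero $\mathbf{x}$, i.e. to $\ker B = \{\mathbf{0}\}$.

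The remaining step is to translate the condition $\ker B = \{\mathbf{0}\}$ into the condition that $B$ has full column rank. Since $B$ is $m\times n$, "full column rank" means $\rank B = n$, and by the rank–nullity theorem $\rank B + \dim\ker B = n$, so $\rank B = n$ if and only if $\dim\ker B = 0$, i.e. $\ker B = \{\mathbf{0}\}$. Chaining these equivalences gives: $A$ positive definite $\iff$ $\ker B=\{\mathbf{0}\}$ $\iff$ $B$ has full column rank.

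There is no real obstacle here — the argument is a short chain of elementary equivalences — so the only thing to be careful about is bookkeeping with the dimensions ($B$ is $m\times n$, $A$ is $n\times n$, and "full column rank" refers to the $n$ columns, which forces $m\geq n$ implicitly whenever $A$ is positive definite). I would present the proof as a single displayed chain of "if and only if" statements, justified by the quadratic-form identity above and by rank–nullity, and note in passing that the symmetry of $A$ is automatic from $A = B^T B$ and is only hypothesized in the statement for emphasis.
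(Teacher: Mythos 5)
Your argument is correct and complete: the identity $\mathbf{x}^T B^T B\mathbf{x} = \|B\mathbf{x}\|^2$ reduces positive-definiteness to triviality of $\ker B$, and rank--nullity converts that to full column rank. The paper itself does not prove this statement but simply cites it from Horn--Johnson (Theorem 7.2.7); your proof is the standard one and would serve as a self-contained replacement for the citation.
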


%\begin{theorem}[See \cite{Horn-Johnson} Theorem 7.2.7]
%Let $A$ be an $n\times n$ Hermitian matrix.
%\begin{compactenum}
%\item $A$ is positive semidefinite if and only if there is an $m\times n$ matrix $B$ such that $A=B^*B$.
%\item If $A=B^*B$, with $B$ an $m\times n$ matrix, and if $x\in \C^n$, then $Ax=0$ if and only if $Bx=0$. So $\nullspace A=\nullspace B$ and $\rank A = \rank B$.
%\item If $A=B^*B$, with $B$ an $m\times n$ matrix, then $A$ is positive definite if and only if $B$ has full column rank.
%\end{compactenum}
%\label{HJ-727}
%\end{theorem}

The polar decomposition of a matrix is incredibly useful. Below, we give the version that best applies to our work.

\begin{theorem}[See \cite{Horn-Johnson} Theorem 7.3.1 Polar decomposition]
Let $A$ be an $n\times n$ real matrix. Then $A=PU=UQ$, in which $P,Q\in M_n$ are positive semidefinite and $U\in M_n$ is orthogonal. The factors $P=(AA^T)^{1/2}$ and $Q=(A^TA)^{1/2}$ are uniquely determined; $P$ is a polynomial in $AA^T$ and $Q$ is a polynomial in $A^TA$. The factor $U$ is uniquely determined if $A$ is nonsingular.
\label{thm:polar}
\end{theorem}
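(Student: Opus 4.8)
The plan is to deduce the polar decomposition from the singular value decomposition (SVD) together with the uniqueness of positive semidefinite square roots already recorded in Theorem~\ref{HJ-726}. First I would write $A = W\Sigma V^T$ with $W, V\in M_n$ orthogonal and $\Sigma = \operatorname{diag}(\sigma_1,\dots,\sigma_n)$, $\sigma_i\ge 0$, the singular values of $A$ (so the $\sigma_i^2$ are the eigenvalues of $A^TA$). Inserting $W^TW=I$ and $V^TV=I$ in the two obvious places gives
$$A = (W\Sigma W^T)(WV^T) = (WV^T)(V\Sigma V^T),$$
so one sets $P := W\Sigma W^T$, $Q := V\Sigma V^T$, and $U := WV^T$. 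Then $U$ is orthogonal as a product of orthogonal matrices, while $P$ and $Q$ are symmetric with nonnegative eigenvalues $\sigma_i$, hence positive semidefinite; this already yields $A = PU = UQ$.

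Next I would identify $P$ and $Q$ and establish uniqueness. Computing $AA^T = W\Sigma V^T V\Sigma W^T = W\Sigma^2 W^T = P^2$, and similarly $A^TA = Q^2$; since $P$ and $Q$ are positive semidefinite, Theorem~\ref{HJ-726} (with $k=2$) forces $P = (AA^T)^{1/2}$ and $Q = (A^TA)^{1/2}$, and these are the only positive semidefinite square roots. For uniqueness of the full factorization, suppose $A = P'U'$ with $P'$ positive semidefinite and $U'$ orthogonal; then $AA^T = P'U'U'^TP' = (P')^2$, so $P' = (AA^T)^{1/2} = P$ again by Theorem~\ref{HJ-726}. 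If $A$ is nonsingular then so is $P$, and $U' = P^{-1}A = U$ is forced; the argument for $Q$ and the right-hand factor is identical by transposing.

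Finally I would handle the polynomiality claim by Lagrange interpolation: let $\lambda_1,\dots,\lambda_m$ be the distinct (nonnegative) eigenvalues of the symmetric matrix $AA^T$ and let $p$ be the polynomial with $p(\lambda_j) = \sqrt{\lambda_j}$. Diagonalizing $AA^T = ODO^T$ with $O$ orthogonal, one checks $p(AA^T) = O\,p(D)\,O^T$ is symmetric positive semidefinite and satisfies $p(AA^T)^2 = O\,p(D)^2\,O^T = ODO^T = AA^T$, so by the uniqueness in Theorem~\ref{HJ-726} it equals $P$; hence $P$ is a polynomial in $AA^T$, and likewise $Q$ in $A^TA$. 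The one step requiring genuine care — and the main obstacle if one does not wish to invoke the SVD as a black box — is constructing the orthogonal factor $U$ in the rank-deficient case: one must observe that $\|Qx\| = \|Ax\|$ for all $x$, so $Qx\mapsto Ax$ is a well-defined linear isometry $\operatorname{range}(Q)\to\operatorname{range}(A)$, and then extend it to an orthogonal map on all of $\R^n$ using $\dim(\operatorname{range}Q)^\perp = \dim(\operatorname{range}A)^\perp = n-\rank(A)$. Routing the proof through the SVD, as above, sidesteps this entirely, which is why I would take that route.
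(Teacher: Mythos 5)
Your proof is correct. The paper does not prove this statement at all --- it is quoted verbatim from Horn--Johnson as a black-box result in the appendix --- and the SVD route you take is the standard one (indeed essentially the one in Horn--Johnson itself): the factorizations $A=(W\Sigma W^T)(WV^T)=(WV^T)(V\Sigma V^T)$, the identification $P^2=AA^T$, $Q^2=A^TA$ combined with the uniqueness of positive semidefinite square roots from Theorem~\ref{HJ-726}, and the Lagrange-interpolation argument for polynomiality are all sound, as is your closing remark that the SVD absorbs the only delicate step (extending the partial isometry in the rank-deficient case).
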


%\begin{theorem}[See \cite{Horn-Johnson} Theorem 7.3.1 Polar decomposition]
%Let $A$ be an $n\times m$ matrix.
%\begin{compactenum}
%\item If $n<m$, then $A=PU$, in which $P\in M_n$ is positive semidefinite and $U\in M_{n,m}$ has orthornormal rows. The factor $P=(AA^*)^{1/2}$ is uniquely determined; it is a polynomial in $AA^*$. The factor $U$ is uniquely determined if $\rank A=n$.
%\item If $n=m$, then $A=PU=UQ$, in which $P,Q\in M_n$ are positive semidefinite and $U\in M_n$ is unitary. The factors $P=(AA^*)^{1/2}$ and $Q=(A^*A)^{1/2}$ are uniquely determined; $P$ is a polynomial in $AA^*$ and $Q$ is a polynomial in $A^*A$. The factor $U$ is uniquely determined if $A$ is nonsingular.
%\item If $n>m$, then $A=UQ$, in which $Q\in M_m$ is positive semidefinite and $U\in M_{n,m}$ has orthornormal columns. The factor $Q=(A^*A)^{1/2}$ is uniquely determined; it is a polynomial in $A^*A$. The factor $U$ is uniquely determined if $\rank A=m$.
%\item If $A$ is real, the factors $P$, $Q$ and $U$ in (1), (2) and (3) may be taken to be real.
%\end{compactenum}
%\end{theorem}

\begin{remark}[See \cite{Dieci-Eirola} Section 2.3 (c)] Suppose $A$ is an $m\times n$ matrix with full rank and $A(t)$ depends $C^k$-smoothly on $t$. Then we can write $A(t)=O(t)P(t)$, where $O$ is orthonormal, $P$ is symmetric positive definite and $O$ and $P$ are as smooth as $A$. 
%
%[To see this, we differentiate $A=OP$, use the orthonormality of $O$ and let $H=O^T\dot{O}$ to get
%\begin{align*}
%\dot{O} & = OH
%\\ \dot{P}&=O^T\dot{A} - HP
%\end{align*}
%Since $P=P^T$, we then must have
%\begin{equation}
%PH + HP = O^T\dot{A} - \dot{A}^T O
%\label{eq:2-20}
%\end{equation}
%Since $P$ is positive definite Equation~\ref{eq:2-20} has a unique solution $H$. This gives the desired result, since $H$ satisfying Equation~\ref{eq:2-20}  is skew-symmetric.]
\label{rmk:P-smooth}
\end{remark}

%%%%%%%%%%%%%%%%%%%%%%%%%
%%%%%%%%%%%%%%%%%%%%%%%%%

\end{document}